\title{Sharp regularity estimates for second order
fully nonlinear parabolic equations}
\author{João Vítor da Silva}
\address{Universidad de Buenos Aires, FCEyN, Department of Mathematics. Ciudad Universitaria-Pabell\'{o}n I -(C1428EGA), Buenos Aires - Argentina}
\email{jdasilva@dm.uba.ar}
\author{Eduardo V. Teixeira}
\address{Universidade Federal do Cear\'a,  Departmento de Matem\'{a}tica, Campus do Pici - Bloco 914, Fortaleza-CE - Brazil, CEP: 60455-760}
\email{teixeira@mat.ufc.br}
\date{}
\def \R {\mathbb{R}}
\def \esslimsup{\mathrm{ess limsup}}
\def \essliminf{\mathrm{ess liminf}}
\newcommand{\defeq}{\mathrel{\mathop:}=}
\newtheorem{theorem}{Theorem}[section]
\newtheorem{lemma}[theorem]{Lemma}
\newtheorem{proposition}[theorem]{Proposition}
\theoremstyle{definition}
\newtheorem{definition}[theorem]{Definition}
\theoremstyle{remark}
\newtheorem{remark}[theorem]{Remark}
\numberwithin{equation}{section}
\subjclass{35K10, 35B65}
\newcommand{\intav}[1]{\mathchoice {\mathop{\vrule width 6pt height 3 pt depth  -2.5pt
\kern -8pt \intop}\nolimits_{\kern -6pt#1}} {\mathop{\vrule width
5pt height 3  pt depth -2.6pt \kern -6pt \intop}\nolimits_{#1}}
{\mathop{\vrule width 5pt height 3 pt depth -2.6pt \kern -6pt
\intop}\nolimits_{#1}} {\mathop{\vrule width 5pt height 3 pt depth
-2.6pt \kern -6pt \intop}\nolimits_{#1}}}
\begin{document}
\maketitle

\begin{abstract}
We prove sharp regularity estimates for viscosity solutions of fully nonlinear parabolic equations of the form
\begin{equation}\label{Meq}\tag{Eq}
   u_t- F(D^2u, Du, X, t) = f(X,t) \quad \mbox{in} \quad Q_1,
\end{equation}
where $F$ is elliptic with respect to the Hessian argument and $f \in L^{p,q}(Q_1)$. The quantity $\kappa(n, p, q):=\frac{n}{p}+\frac{2}{q}$ determines to which regularity regime a solution of \eqref{Meq} belongs. We prove that when $1< \kappa(n,p,q) < 2-\epsilon_F$, solutions are parabolic-H\"{o}lder continuous for a sharp, quantitative exponent $0< \alpha(n,p,q) < 1$. Precisely at the  critical borderline case, $\kappa(n,p,q)= 1$, we obtain sharp Log-Lipschitz regularity estimates. When $0< \kappa(n,p,q) <1$, solutions are locally of class $C^{1+ \sigma, \frac{1+ \sigma}{2}}$ and in the limiting case $\kappa(n,p,q) = 0$, we show $C^{1, \text{Log-Lip}}$ regularity estimates provided $F$ has ``better'' \textit{a priori} estimates.

{\bf Keywords}: Fully nonlinear parabolic equations, optimal borderline estimates, sharp moduli of continuity.


\end{abstract}
\section{Introduction}\label{Introduction}

\hspace{0.65cm}The study of second order parabolic equations plays a fundamental role in the development of several fields in pure and applied mathematics, such as differential geometry, functional and harmonic analysis, infinite dimensional dynamical systems, probability, as well as in mechanics, thermodynamics, electromagnetism, among others. The non-homogeneous heat equation,
\begin{equation}\label{prot}
u_t - \Delta u = f \quad \mbox{in} \quad Q_1 = B_1 \times (-1,0],
\end{equation}
$f \in L^p(Q_1)$, $p> \frac{n+2}{2}$, represents the simplest linear prototype. Its mathematical analysis goes back to 19th century and the regularity theory for such an equation is nowadays fairly complete.  The fully nonlinear parabolic theory is quite more recent. The fundamental works of Krylov and Safonov, \cite{KS1}, \cite{KS2} on linear, non-divergence form elliptic equations set the beginning of the development of the regularity theory for viscosity solutions to fully nonlinear parabolic equations. Since then this has been a central subject of research. Wang in \cite{Wang 1, Wang 2} proves Harnack inequality and $C^{1+ \alpha,\frac{1+ \alpha}{2}}$ estimates for fully nonlinear parabolic equations, and Crandall \textit{et al} in \cite{CKS} develop an $L^p$-viscosity theory. Krylov in  \cite{KRYL1, KRYL2} obtains $C^{2+\alpha, \frac{2+\alpha}{2}}$ estimates for solutions to $u_t-F(D^2u) = 0$, under convexity assumptions,  and Caffarelli and Stefanelli in \cite{CS} exhibit solutions to uniform parabolic equations that are not $C^{2, 1}.$

Non-divergence form parabolic equations involving sources with mixed integrability conditions $f \in L^{p,q}(Q_1)$, as in \eqref{Meq} have also been fairly well studied in the literature. Existence in suitable parabolic Sobolev spaces has been proven by Krylov,  see \cite{KRYL4, KRYL5}, see also the sequence of works by Kim \cite{Kim1, Kim3}. Insofar as regularity estimates are concerned, only qualitative results are available when $p$ and $q$ are sufficient large. Nonetheless, as in a number of physical, geometric and free boundary problems, obtaining a quantitative sharp regularity estimate for solutions is decisive for a refine analysis. Hence, the purpose this paper is to obtain sharp moduli of continuity to solutions for second order parabolic equation \eqref{Meq}, involving sources with mixed norms, which depends only on  dimension, $p$ and $q$.

Hereafter we denote by
$$
	\kappa = \kappa(n,p,q) \defeq  \frac{n}{p} + \frac{2}{q}.
$$
The first quantitative regularity result we show states that if  $1< \kappa(n,p,q) < \frac{n+2}{n_P},$  where $\frac{n+2}{2} < n_P < n+1$ is universal, 
then solutions are parabolically $\alpha$-H\"older continuous for the sharp exponent
$ \alpha \defeq	 2-\left(\frac{n}{p} + \frac{2}{q}\right)$.

Intuitively, as $\kappa(n,p,q)$ decreases, one should expect that regularity estimates of solutions improve	. The borderline is $\kappa(n,p,q) = 1$, where we prove that solutions are parabolically Log-Lipschitz continuous. The result is a further quantitative improvement to the fact that $u \in C_{loc}^{\alpha, \frac{\alpha}{2}}(Q_1)$ for any $0<\alpha <1$.

When $0< \kappa(n,p,q) <1$, we show that solutions are $C^{1+\beta, \frac{1+\beta}{2}}$, for $\beta \lesssim 1-\left(\frac{n}{p} + \frac{2}{q}\right)$. Qualitative results, when $p=q > n+1$, were previously obtained by Crandall \textit{et al} \cite{CKS} and Wang \cite{Wang 2}.

Finally, we deal with the upper borderline case, $f\in \text{BMO}(Q_1)$. Under appropriate higher \textit{a priori} estimates on $F$, we show that solutions are $C_{loc}^{1, \text{Log-Lip}}(Q_1)$. Particularly, $u \in C_{loc}^{1+\alpha, \frac{1+\alpha}{2}}(Q_1)$ for any $0<\alpha <1$.

The table below provides a global picture of the parabolic regularity theory for equations with anisotropic sources, in comparison with the sharp elliptic estimate from \cite{Tei}:

\begin{table}[h]
\centering
\begin{tabular}{|c|c|c|c|c}
\cline{1-4}
 $f \in L^p(B_1)$  & Regularity of $u$ & $f \in L^{p,q}(Q_1)$ & Regularity of $u$  \\ \cline{1-4}
   $n-\varepsilon \leq p<n$ &  $C_{loc}^{0,2-\frac{n}{p}}(B_1)$ & $1< \frac{n}{p} + \frac{2}{q}< \frac{n+2}{n_P}$ & $C_{loc}^{\varsigma, \frac{\varsigma}{2}}(Q_1)$ \\ \cline{1-4}
   $p=n$ &  $C_{loc}^{0,\textrm{Log-Lip}}(B_1)$ & $\frac{n}{p} + \frac{2}{q}=1$ & $C_{loc}^{0,\textrm{Log-Lip}}(Q_1)$ \\ \cline{1-4}
   $p>n$ &  $C_{loc}^{1,\min\left\{\alpha^{-}, 1-\frac{n}{p}\right\}}(B_1)$ & $0<\frac{n}{p} + \frac{2}{q}<1$& $C_{loc}^{1+\mu, \frac{1+\mu}{2}}(Q_1)$  \\ \cline{1-4}
   $ \text{BMO} \varsupsetneq L^{\infty}$ & $C_{loc}^{1, \textrm{Log-Lip}}(B_1)$ & $\text{BMO} \varsupsetneq L^{\infty}$& $C_{loc}^{1, \textrm{Log-Lip}}(Q_1)$ \\\cline{1-4}
\multicolumn{4}{|l|}{ \hspace{1.34cm} {\bf Elliptic Theory  \quad \quad \quad \quad X \quad \quad \quad \quad  Parabolic Theory}} &  \\ \cline{1-4}
\end{tabular}
\end{table}
where $\varsigma \defeq 2-\left(\frac{n}{p} + \frac{2}{q}\right)$ and $\mu \defeq \min\left\{\alpha^{-}, 1-\left(\frac{n}{p} + \frac{2}{q}\right)\right\}$, and $\alpha^{-}$ means $\alpha-\epsilon$ for every $0< \epsilon$.  	

It is interesting to note that the parabolic regularity estimates agree with its elliptic counterpart provided $f \in L^{p, \infty}(Q_1)$.


Next picture shows the critical surfaces and the regions they define for the optimal regularity estimates available for solutions to \eqref{Meq}.

\pagebreak

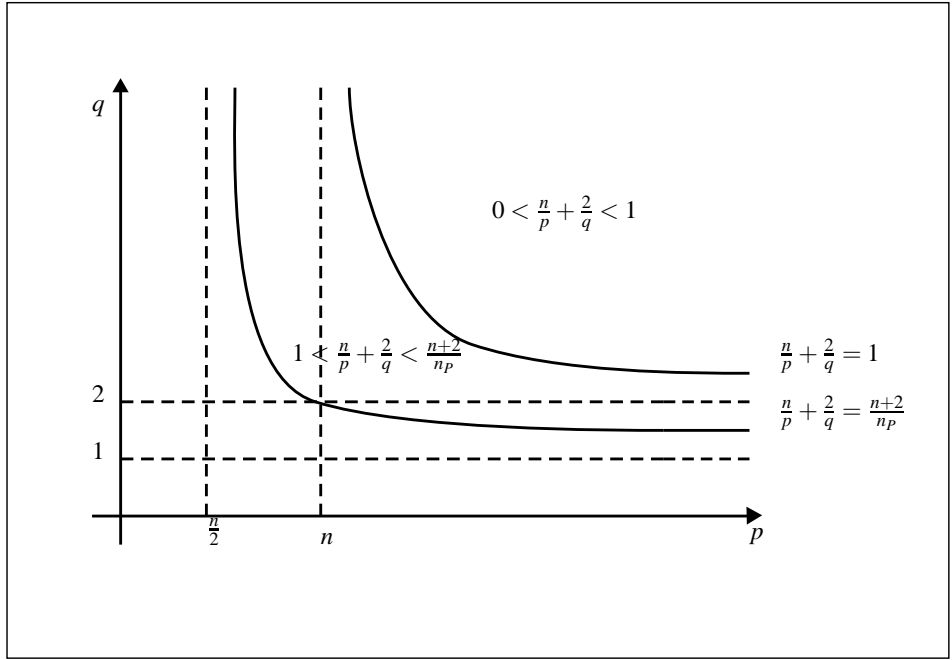
\begin{figure}[h]
\begin{center}
\psscalebox{0.95} 
{
\begin{pspicture}(0,-4.6)(13.2,4.6)
\psframe[linecolor=black, linewidth=0.02, dimen=outer](13.2,4.6)(0.0,-4.6)
\psline[linecolor=black, linewidth=0.04](1.6,3.4)(1.6,-3.0)
\psline[linecolor=black, linewidth=0.04](1.2,-2.6)(10.4,-2.6)
\psline[linecolor=black, linewidth=0.04, linestyle=dashed, dash=0.17638889cm 0.10583334cm](2.8,-2.6)(2.8,3.4)
\psline[linecolor=black, linewidth=0.04, linestyle=dashed, dash=0.17638889cm 0.10583334cm](1.6,-1.8)(9.2,-1.8)
\psline[linecolor=black, linewidth=0.04, linestyle=dashed, dash=0.17638889cm 0.10583334cm](1.6,-1.0)(9.2,-1.0)
\psbezier[linecolor=black, linewidth=0.04](3.2,3.4)(3.2,2.6)(3.05,-0.6)(4.325,-1.0)(5.6,-1.4)(8.45,-1.4)(9.2,-1.4)
\psbezier[linecolor=black, linewidth=0.04](4.8,3.4)(4.8,2.6)(5.309091,0.2)(6.523077,-0.2)(7.737063,-0.6)(9.538462,-0.6)(10.4,-0.6)
\psline[linecolor=black, linewidth=0.04, linestyle=dashed, dash=0.17638889cm 0.10583334cm](4.4,3.4)(4.4,-2.6)
\rput[bl](10.4,-3.0){$p$}
\rput[bl](1.2,3.0){$q$}
\rput[bl](4.4,-3.0){$n$}
\rput[bl](2.8,-3.0){$\frac{n}{2}$}
\rput[bl](6.8,1.4){$0<\frac{n}{p}+\frac{2}{q}<1$}
\rput[bl](10.8,-0.6){$\frac{n}{p}+\frac{2}{q}=1$}
\rput[bl](1.2,-1.8){$1$}
\rput[bl](1.2,-1.0){$2$}
\rput[bl](4.0,-0.6){$1<\frac{n}{p}+\frac{2}{q}< \frac{n+2}{n_P}$}
\psdots[linecolor=black, dotstyle=triangle*, dotsize=0.2](1.6,3.4)
\rput{31.265638}(0.19308585,-5.7662797){\psdots[linecolor=black, dotstyle=triangle*, dotsize=0.22](10.4,-2.6)}
\psline[linecolor=black, linewidth=0.04](9.2,-1.4)(10.4,-1.4)
\psline[linecolor=black, linewidth=0.04, linestyle=dashed, dash=0.17638889cm 0.10583334cm](9.2,-1.0)(10.4,-1.0)
\psline[linecolor=black, linewidth=0.04, linestyle=dashed, dash=0.17638889cm 0.10583334cm](9.2,-1.8)(10.4,-1.8)
\rput[bl](10.8,-1.4){$\frac{n}{p}+\frac{2}{q}=\frac{n+2}{n_P}$}
\end{pspicture}
}
\end{center}
\caption{ Critical surfaces for optimal regularity estimates.}
\end{figure}

\noindent  {\bf Acknowledgement.} This article is part of the first author's Ph.D thesis. He would like to thank the Department of Mathematics at Universidade Federal do Cear\'a  for fostering a pleasant and productive scientific atmosphere, which has benefited a lot the final outcome of this current project. This work has been partially supported by Capes and CNPq, Brazil.

\section{Definitions and preliminary results} \label{PT}

\hspace{0.65cm}Throughout this paper $F \colon Sym(n) \times \R^n \times B_1(0) \times (-1,0]\longrightarrow \mathbb{R}$ is a fully nonlinear uniformly elliptic operator with respect to the Hessian argument and Lipschitz with respect to gradient dependence. That is, there are constants $\Lambda \ge \lambda >0$ and $\upsilon \geq 0$ such that for all $Z, W \in R^n$ and $M,N \in \mbox{Sym}(n)$, space of $n \times n$ symmetric matrices, with $M \geq N$, there holds
\begin{equation}\label{eq1}
    \mathcal{M}^{-}_{\lambda,\Lambda}(M-N)- \upsilon |Z-W| \le F(M, Z, X, t)-F(N, W,  X, t) \le \mathcal{M}^{+}_{\lambda,\Lambda}(M-N) + \upsilon |Z-W|.
\end{equation}
Hereafter, $\mathcal{M}^{\pm}_{\lambda,\Lambda}$ denote the Pucci's extremal operators:
$$
    \mathcal{M}^{+}_{\lambda,\Lambda}(M) := \lambda \cdot \sum_{e_i <0} e_i  + \Lambda \cdot \sum_{e_i >0} e_i \quad \mbox{and} \quad \mathcal{M}^{-}_{\lambda,\Lambda}(M) := \lambda \cdot \sum_{e_i >0} e_i  + \Lambda \cdot \sum_{e_i <0} e_i
$$
where $\{e_i : 1 \le i \le n\}$ are the eigenvalues of $M$. We can (and will) always assume that $F$ is normalized as $F(0, 0, X, t) = 0$. Any operator $F$ which satisfies the condition
\eqref{eq1} will be referred in this article as a \textit{$(\lambda,
\Lambda, \upsilon)$-parabolic operator}. Following classical terminology, any constant or mathematical term which depends only on dimension and of the parabolic parameters
$\lambda$, $\Lambda$ and $\upsilon$ will be called \textit{universal}.

%
%
%
%
%

Equations and problems studied here are designed in the $(n+1)$-dimensional Euclidean space, $\mathbb{R}^{n+1}$. The semi-open cylinder is denoted by
$Q_r(X_0, \tau) = B_r(X_0) \times (\tau -r^2, \tau]$. For simplicity we refer $Q_1(0, 0) = Q_1$. The \textit{parabolic distance} between the points $P_1=(X_1, t_1)$ and $P_2=(X_2, t_2) \in Q_1$ is defined by
$$
   \displaystyle d_{\text{par}}(P_1, P_2) \defeq \sqrt{|X_1-X_2|^2 + |t_1-t_2|}.
$$

For a function $u\colon Q_1 \rightarrow \mathbb{R}$ the semi-norm and norm for the {\it parabolic H\"{o}lder space} are defined respectively by

\begin{equation}\label{HNorm}
   \displaystyle [u]_{C^{\alpha, \frac{\alpha}{2}}(Q_1)} \defeq \sup_{(X, t),(Y, s)\in Q_1 \atop{(X, t) \not= (Y, s)}} \frac{|u(X, t)-u(Y, s)|}{d_{\text{par}}((X, t),(Y,s))^{\alpha}} \quad \text{and} \quad \|u\|_{C^{\alpha, \frac{\alpha}{2}}(Q_1)} \defeq \|u\|_{C^{0}(Q_1)} + [u]_{C^{\alpha, \frac{\alpha}{2}}(Q_1)}.
\end{equation}
Under finiteness of such a norm one concludes that $u$ is $\alpha$-H\"{o}der continuous with respect to the spatial variables and $\frac{\alpha}{2}-$H\"{o}der with respect to the temporal variable. $C^{1+\alpha, \frac{1+\alpha}{2}}(Q_1)$ is the space of $u$ whose spacial gradient $Du(X,t)$ there exists in the classical sense for every $(X, t) \in Q_1$ and such that
$$
	\begin{array}{lll}
  		  \|u\|_{C^{1+ \alpha, \frac{1+ \alpha}{2}}(Q_1)} &:=& \|u\|_{L^{\infty}(Q_1)} + \|Du\|_{L^{\infty}(Q_1)} \\
		 	&+& \displaystyle \sup_{(X, t),(Y, s)\in Q_1 \atop{(X, t) \not= (Y, s)}} \frac{|u(X,t)-u(Y, \tau)-D u(X,t)\cdot (X-Y)|}{d_{\text{par}}^{1+\alpha}((X,t),(Y, s))}
	\end{array}
$$
is finite. It is easy to verify that $u \in C^{1+ \alpha, \frac{1+ \alpha}{2}}(Q_1)$ implies every component of $Du$ is $C^{0, \alpha}(Q_1)$ , and $u$ is $\frac{1+\alpha}{2}-$H\"{o}lder continuous in the variable $t$, see for instance \cite{CKS}.

A function  $u$ belongs to the Sobolev space $W^{2,1,p}(Q_1)$ if it satisfies $u, D u, D^2 u, u_t \in L^p(Q_1)$. The corresponding norm is given by
$$
    \|u\|_{W^{2,1,p}(Q_1)} = [\|u\|^p_{L^p{(Q_1)}}+\|u_t\|_{L^{p}(Q_1)}^p+\|D u\|_{L^{p}(Q_1)}^p+ \|D^2 u\|_{L^{p}(Q_1)}^p]^{\frac{1}{p}}
$$
It follows by Sobolev embedding that if $p>\frac{n+2}{2}$ then $W^{2,1,p}(Q_1)$ is continuously embedded in $C^0(Q_1)$. Also, $u \in W_{loc}^{2,1,p}(Q_1)$ implies that $u$ is twice parabolically differentiable a.e., see for more details \cite{CKS}.

\begin{definition}[$L^P$-viscosity solutions]\label{VS}Let $G \colon Sym(n) \times \R^n \times \R \times B_1(0) \times (-1,0]\to \mathbb{R}$ be a uniformly elliptic operator, $P> \frac{n+2}{2}$ and $f \in L_{loc}^{P}(Q_1)$. We say that a function $u \in C^0(Q_1)$ is an $L^{P}$-viscosity subsolution (respectively supersolution) to

\begin{equation}\label{eqVS}
u_t-G(D^2 u(X,t), Du(x, t), u(X,t), X, t) = f(X,t) \quad in \quad Q_1
\end{equation}
if for all $\varphi \in W_{loc}^{2, 1 ,P} (Q_1)$ whenever $\varepsilon >0$ and $\mathcal{O} \subset Q_1$ is an open and
$$
\varphi_t-G(D^2 \varphi(X,t), D\varphi(x, t), \varphi(X,t), X, t) - f(X,t) \geq  \varepsilon \quad (resp. \leq - \varepsilon)\quad a.e. \quad in \quad \mathcal{O}
$$
then $u-\varphi$ cannot attains a local maximum (resp. minimum) in $\mathcal{O}$. In an equivalent manner, $u$ is an $L^{P}-$viscosity subsolution (resp. supersolution) if for all test function $\varphi \in W_{loc}^{1, 2 ,P} (Q_1)$ and $(X_0, t_0) \in Q_1$ at which $u - \varphi$ attain a local maximum (resp. minimum) one has

$$
   \displaystyle \stackrel[(X, t) \to (X_0, t_0)]{}{\essliminf} [\varphi_t-G(D^2 \varphi(X,t), D \varphi(x, t), \varphi(X,t), X, t) - f(X,t)] \leq 0
$$

$$
    \displaystyle \stackrel[(X, t) \to (X_0, t_0)]{}{\esslimsup}  [\varphi_t-G(D^2 \varphi(X,t), D \varphi(x, t), \varphi(X,t), X, t) - f(X,t)] \geq 0
$$
Finally we say that $u$ is an $L^{P}$-viscosity solution to \eqref{eqVS} if it is both an $L^{P}$-viscosity supersolution and an $L^{P}$-viscosity subsolution.
\end{definition}

According to \cite{CKS} and \cite{Wang 1} for a fixed $(X_0, \tau) \in Q_1$, we
measure the oscillation of the coefficients of $F$ around $(X_0, \tau )$ by the quantity
\begin{equation}\label{osc}
   \displaystyle \Theta_F(X_0,\tau,  X, t) := \sup_{M \in Sym(n)} \frac{|F(M, 0, X, t)-F(M, 0, X_0, \tau)|}{\|M\| + 1}.
\end{equation}
For notation purposes, we shall often write $\Theta_F(0, 0, X, t) = \Theta_F(X, t)$.

We recall that a function $f$ is said to belong to the anisotropic Lebesgue space, $L^{p,q}(Q_1)$ if
$$
    \displaystyle \|f\|_{L^{p, q}(Q_1)} := \left(\int_{-1}^{0}  \left( \int_{B_1}|f(X,t)|^pdX\right)^{\frac{q}{p}}dt\right)^{\frac{1}{q}} = \|\|f( \cdot, t )\|_{L^p(B_1)}\|_{L^q((-1,0])} < +\infty.
$$
This is a Banach space when endowed with the norm above.  When $p=q$, this is the standard definition of $L^p$ spaces. The definition are naturally extended when either $p$ or $q$ are infinity. It is plain to verify that $L^{p,q}(Q_1) \subset L^s(Q_1)$ for $s \defeq \min\{p, q\}$.

We recall the existence of a constant $n_P$, satisfying $\frac{n+2}{2}\leq n_P<n+1$, for which Harnack inequality holds for $L^P$-viscosity solutions, provided  $P>n_P$, see for instance \cite{CKS}.  The following compactness result becomes then available:

\begin{proposition}[Compactness of solutions]\label{CompSol} Let $u \in C^0(Q_1)$ be an $L^P$-viscosity solution to \eqref{Meq} under the assumption  $P \geq \min\{p, q\} > n_P$. Then
$u$ is locally of class  $C^{\beta, \frac{\beta}{2}}$ for some $0<\beta<1$ and
$$
   \displaystyle \|u\|_{C^{\beta, \frac{\beta}{2}}(Q_r)} \leq C(n, \lambda, \Lambda)r^{-\beta}\left(\|u\|_{L^{\infty}(Q_{2r})}+ r^{2-\left(\frac{n}{p}+\frac{2}{q}\right)}\|f\|_{L^{p, q}(Q_{2r})}\right).
$$
\end{proposition}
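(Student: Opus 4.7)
The proof is a standard normalize-scale-apply-unscale argument that reduces the statement to the classical interior H\"older estimate for $L^P$-viscosity solutions of uniformly parabolic equations. My plan is as follows.

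First I would introduce the rescaled function
$$
v(X,t) \defeq \frac{u(rX,\, r^2 t)}{K}, \qquad K \defeq \|u\|_{L^\infty(Q_{2r})} + r^{\,2-\kappa}\|f\|_{L^{p,q}(Q_{2r})},
$$
and verify by direct computation that $v$ is an $L^P$-viscosity solution in $Q_2$ of $v_t - \tilde F(D^2 v, Dv, X, t) = \tilde f(X,t)$, where
$$
\tilde F(M, p, X, t) \defeq \tfrac{r^2}{K}\, F\!\Big(\tfrac{K}{r^2} M,\; \tfrac{K}{r} p,\; rX,\; r^2 t\Big), \qquad \tilde f(X,t) \defeq \tfrac{r^2}{K}\, f(rX, r^2 t).
$$
For $r\le 1$, property \eqref{eq1} immediately shows that $\tilde F$ remains $(\lambda,\Lambda,\upsilon)$-parabolic with the same universal constants.

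Next I would check the two normalizations forced by the choice of $K$. The bound $\|v\|_{L^\infty(Q_2)}\le 1$ is immediate, while the parabolic change of variables $Y=rX$, $s=r^2 t$ produces
$$
\|\tilde f\|_{L^{p,q}(Q_2)} \;=\; \frac{r^{\,2-\kappa}}{K}\,\|f\|_{L^{p,q}(Q_{2r})} \;\le\; 1,
$$
which is precisely why the exponent $2-\kappa=2-n/p-2/q$ appears in the definition of $K$. Since $\min\{p,q\}>n_P$ and $Q_2$ is bounded, $\tilde f\in L^{\min\{p,q\}}(Q_2)$ with a universal bound.

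At this point the classical interior H\"older estimate for $L^P$-viscosity solutions with source in $L^P$, $P>n_P$ (see \cite{Wang 1, CKS}, which rest on parabolic Krylov--Safonov and Harnack), applies directly to $v$ and produces universal constants $0<\beta<1$ and $C$ with $\|v\|_{C^{\beta,\beta/2}(Q_1)}\le C$. Undoing the scaling via $u(Y,s)=K\,v(Y/r,s/r^2)$ together with the parabolic identity $d_{\text{par}}((Y/r,s/r^2),(Y'/r,s'/r^2))=r^{-1}\,d_{\text{par}}((Y,s),(Y',s'))$ gives $[u]_{C^{\beta,\beta/2}(Q_r)}=K\,r^{-\beta}\,[v]_{C^{\beta,\beta/2}(Q_1)}$; combining with $\|u\|_{L^\infty(Q_r)}\le K$ yields the stated inequality. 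The only genuinely delicate point is the mixed-norm scaling computation — tracking how the factor $r^{n/p+2/q}$ emerges from the change of variables — which is what pins down the exponent $2-\kappa$; everything else is routine bookkeeping and a black-box application of the existing $L^P$-viscosity regularity theory.
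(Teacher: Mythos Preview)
The paper does not supply a proof of this proposition; it is stated as a direct consequence of the Harnack inequality and interior H\"older regularity for $L^P$-viscosity solutions from \cite{CKS, Wang 1}, recalled in the paragraph immediately preceding it. Your normalize--rescale--apply--unscale argument is exactly the standard derivation of the scaled estimate from the unit-scale one in those references, and it is correct; the only cosmetic omission is the trivial case $K=0$, where $u\equiv 0$ and there is nothing to prove.
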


In the sequel, we obtain a Lemma which provides a tangential path toward the regularity theory available for constant coefficient, homogeneous $F$-caloric functions.

\begin{lemma}[$F$-caloric approximation Lemma]\label{Lemma1} Let $u \in C^{0}(Q_1)$ be an $L^P$-viscosity solution to \eqref{Meq} with $|u|\leq 1$ and $f\in L^{p,q}(Q_1)$ with $P \defeq \min\{p,q\} >  n_P$. Given $\delta >0$, there exists $\eta = \eta(n, \Lambda, \lambda, \delta) >0$ such that if

$$
   \displaystyle \max\left\{\left(\intav{Q_1} \Theta^{P}_F(X, t)\right)^{\frac{1}{P}}, \|f\|_{L^{p, q}(Q_1)}, \upsilon\right\} \leq \eta,
$$
then we can find a function $h \colon Q_{1/2}\to
\mathbb{R}$ and a $(\lambda, \Lambda, 0)-$parabolic, constant
coefficients operator $\mathfrak{F}\colon  Sym(n)\to
\mathbb{R},$ such that

\begin{equation}\label{eq2}
    h_t-\mathfrak{F}(D^2 h)=0, \quad \mbox{in} \quad Q_{1/2}
\end{equation}
in the $L^P$-viscosity sense, and, moreover

\begin{equation}\label{eq3}
    \displaystyle \sup_{ (X, t) \in Q_{1/2}} |(u-h)(X,t)| \leq \delta.
\end{equation}
\end{lemma}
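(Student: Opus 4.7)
The natural route is a standard compactness/contradiction argument, in the spirit of Caffarelli's approximation method adapted to the parabolic, $L^p$-viscosity, anisotropic framework of \cite{CKS,Wang 1}.

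\textbf{Step 1: Setup by contradiction.} Suppose the conclusion fails. Then there exist $\delta_0>0$ and sequences $u_k \in C^0(Q_1)$, $F_k$ (each being a $(\lambda,\Lambda,\upsilon_k)$-parabolic operator with $F_k(0,0,X,t)=0$), $f_k \in L^{p,q}(Q_1)$, such that $|u_k|\le 1$, $u_k$ is an $L^P$-viscosity solution of $\partial_t u_k - F_k(D^2 u_k, Du_k, X, t) = f_k$ in $Q_1$, and
\begin{equation*}
\max\!\left\{\Bigl(\,\intav{Q_1}\!\Theta_{F_k}^P(X,t)\,dXdt\Bigr)^{1/P},\ \|f_k\|_{L^{p,q}(Q_1)},\ \upsilon_k\right\}\ \le\ \frac{1}{k},
\end{equation*}
yet $\sup_{Q_{1/2}}|u_k-h|>\delta_0$ for every $(\lambda,\Lambda,0)$-constant-coefficient parabolic operator $\mathfrak{F}$ and every $L^P$-viscosity solution $h$ of $h_t-\mathfrak{F}(D^2 h)=0$ in $Q_{1/2}$.

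\textbf{Step 2: Compactness of $\{u_k\}$.} Since $P>n_P$ and $\|f_k\|_{L^{p,q}}\le 1/k$, Proposition \ref{CompSol} yields a uniform interior $C^{\beta,\beta/2}$ bound for the $u_k$ on compact subsets of $Q_1$. By Arzelà--Ascoli and a diagonal extraction, along a subsequence $u_k \to u_\infty$ locally uniformly in $Q_1$, with $u_\infty \in C^{\beta,\beta/2}_{\loc}(Q_1)$ and $|u_\infty|\le 1$.

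\textbf{Step 3: Compactness of the operators.} Since $\upsilon_k\to 0$, the gradient dependence of $F_k$ is asymptotically trivial, so after freezing the spatial/time variable one may focus on $F_k(M,0,X,t)$. The bound on $\intav{Q_1}\Theta_{F_k}^P$ combined with the uniform $(\lambda,\Lambda)$-ellipticity (which gives equicontinuity in $M$ of each slice) allows one to extract, via the Arzelà--Ascoli theorem on the space of $(\lambda,\Lambda)$-elliptic operators and Fubini, a subsequence and a $(\lambda,\Lambda,0)$-constant-coefficient operator $\mathfrak{F}\colon \mathrm{Sym}(n)\to\mathbb{R}$ such that $F_k(M,0,X,t)\to \mathfrak{F}(M)$ for a.e.\ $(X,t)\in Q_1$ and locally uniformly in $M$; in particular the $L^P$-average of $|F_k(M,0,X,t)-\mathfrak{F}(M)|/(\|M\|+1)$ tends to zero on $Q_{1/2}$.

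\textbf{Step 4: Passage to the limit.} Apply the stability theorem for $L^P$-viscosity solutions of fully nonlinear parabolic equations from \cite{CKS} (its hypotheses are exactly that $u_k\to u_\infty$ uniformly, $\|f_k\|_{L^P}\to 0$, $\upsilon_k\to 0$, and the operators converge in the $L^P$-average sense just described, noting $L^{p,q}(Q_1)\hookrightarrow L^P(Q_1)$ with $P=\min\{p,q\}$). The stability theorem then gives that $u_\infty$ is an $L^P$-viscosity solution of
\begin{equation*}
\partial_t u_\infty - \mathfrak{F}(D^2 u_\infty)=0 \quad \text{in } Q_1.
\end{equation*}
Taking $h\defeq u_\infty|_{Q_{1/2}}$ yields $\sup_{Q_{1/2}}|u_k-h|\to 0$, contradicting $\sup_{Q_{1/2}}|u_k-h|>\delta_0$ for large $k$.

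\textbf{Main obstacle.} The delicate point is Step 4: one needs the parabolic $L^P$-viscosity stability theorem to handle simultaneously (i) vanishing gradient dependence ($\upsilon_k\to 0$), (ii) vanishing anisotropic source ($\|f_k\|_{L^{p,q}}\to 0$, used via the embedding into $L^P$), and (iii) the convergence of the frozen operators $F_k(\cdot,0,X,t)$ to the constant-coefficient $\mathfrak{F}$ measured by the $L^P$-average of $\Theta_{F_k}$. Each ingredient is standard individually, but verifying that the available stability statement in \cite{CKS} covers precisely this combination, including the extraction of a genuine $(\lambda,\Lambda,0)$-limit $\mathfrak{F}$ independent of $(X,t)$, is what makes the lemma nontrivial.
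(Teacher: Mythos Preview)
Your proof is correct and follows essentially the same route as the paper: a contradiction argument, interior H\"older compactness of the $u_k$ via Proposition~\ref{CompSol}, extraction of a constant-coefficient $(\lambda,\Lambda,0)$ limiting operator from the $F_k$, and then the $L^P$-viscosity stability results of \cite[Section~6]{CKS} to conclude that the limit $u_\infty$ solves the homogeneous equation, yielding the contradiction. The paper's proof is more terse in Steps~3--4, but the logical skeleton and the references invoked are the same.
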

\begin{proof} The proof is based on a contradiction argument. Suppose that there exists a ${\delta}_0>0$ for which the thesis of Lemma \ref{eq3} is not true. That is, we could find a sequence of functions $(u_j)_{j\geq 1}$, with $|u_j|\leq 1$ in $Q_1$, a sequence
of $(\lambda, \Lambda, \upsilon_j)$-operators $F_j\colon Sym(n) \times \R^n \times Q_1\to \mathbb{R}$ and a sequence of functions $(f_j)_{j \geq 1}$ satisfying
\begin{equation}\label{eq4}
   (u_j)_t - F_j(D^2 u_j, Du_j, X, t)= f_j(X, t) \quad \mbox{in} \quad Q_1
\end{equation}
in the $L^P$-viscosity sense, with
\begin{equation}\label{eq5}
    \displaystyle \max\left\{\left(\intav{Q_1} \Theta^{P}_{F_j}(X, t)\right)^{\frac{1}{P}},  \|f_j\|_{L^{p, q}(Q_1)}, \upsilon_j\right\}= \text{o}(1) \quad as \quad j \to \infty,
\end{equation}
however
\begin{equation}\label{eq6}
    \displaystyle \sup_{ (X, t) \in Q_{1/2}} |(u_j-h)(X, t)| > {\delta}_0
\end{equation}
for all $h$ which satisfies \eqref{eq2} and all $(\lambda, \Lambda, 0)$-operator $\mathfrak{F}$. By H\"{o}lder regularity of the sequence $(u_j)_{j\geq 1}$, Proposition \ref{CompSol}, we
may assume, passing to a subsequence if necessary, that $u_j \to u_0$ locally uniformly in $Q_1$. Furthermore, it follows from structural condition \eqref{eq1} of the sequence of operators $(F_j)_{j\geq 1}$ that $F_j(M, Z, X, t)\to F_0(M, Z, X, t)$ locally uniformly in the space $Sym(n)\times \R^n$ for each $(X, t) \in Q_1$ fixed. Moreover, by hypothesis \eqref{eq5}, $F_0$ is a $(\lambda, \Lambda, 0)$ constant coefficients operator, see for instance \cite{CKS} and \cite{Wang 1}. To conclude the proof, we use stability arguments, see \cite[Section 6]{CKS}, as to deduce that
$$
   (u_0)_t-F_0(D^2 u_0)=0 \quad \mbox{in} \quad Q_{1/2},
$$
in the $L^P$-viscosity sense, This gives a contradiction to \eqref{eq6} to $j\gg 1$ and the proof of the Lemma is concluded.
\end{proof}


%

We conclude this section by commenting on reduction processes to be used throughout the proof.

\begin{remark}\label{Rem1}[Preserving ellipticity] If $F$ is a $(\lambda, \Lambda, \upsilon)$-parabolic operator then
$$
	G(M, \overrightarrow{Z}, X, t)= \kappa^2F\left(\frac{M}{\kappa^2}, \frac{\overrightarrow{Z}}{\kappa} X, t\right)
$$
 is a $(\lambda,\Lambda, \kappa \upsilon)$-parabolic operator.
\end{remark}

\begin{remark}\label{Rem2} [Normalization and scaling] We can always suppose, without loss of generality, that viscosity solutions of
$$
u_t-F(D^2u, Du, X, t) = f(X,t) \quad in \quad Q_1
$$
satisfy $\|u\|_{L^{\infty}(Q_1)} \leq 1$. Also given a small number $\varepsilon_0>0$, we can also suppose that $\upsilon  + \|f\|_{L^{p,q}(Q_1)} < 2\varepsilon_0$. In fact, for $\kappa \defeq \frac{\varepsilon_0}{\varepsilon_0\|u\|_{L^{\infty}(Q_1)}+ \|f\|_{L^{p,q}(Q_1)}}$ and $R>\max\left\{1, \frac{\varepsilon_0}{\upsilon}\right\}$, defining
$$
	v(X, t) \defeq \kappa u(R^{-1}X,R^{-2}t)
$$
we readily verify that
\begin{enumerate}
\item $\|v\|_{L^{\infty}(Q_1)} \leq 1$;
\item $v_t - G(D^2v, Dv, X, t) = g(X, t)  \mbox{ in }  Q_1,$ in the $L^P$-viscosity sense, where
$$
G(M, \overrightarrow{Z}, X, t) = \frac{\kappa}{R^2}F\left(\frac{R^2}{\kappa}M, \frac{R}{\kappa}\overrightarrow{Z}, R^{-1}X, R^{-2}t\right) \quad and \quad g(X,t) = \frac{\kappa}{R^2}f(R^{-1}X, R^{-2}t);
$$
\item $G$ is a $(\lambda, \Lambda, \overline{\upsilon})$-parabolic operator, with $\overline{\upsilon}<\varepsilon_0$;
\item $\|g\|_{L^{p,q}(Q_1)} \leq \varepsilon_0$.
\end{enumerate}

\end{remark}


\section{Optimal $C^{\alpha, \frac{\alpha}{2}}$ regularity}\label{HR}

\hspace{0.65cm}Our strategy for proving optimal $C^{\alpha, \frac{\alpha}{2}}$ regularity estimates is based on a refined compactness method as in \cite{CKS, Tei, Wang 1, Wang 2}. It relies on a control of decay of oscillation based on the regularity theory available for a nice limiting equation.  Roughly speaking the \textit{geometric tangential analysis} of the limit arising from of family of fully nonlinear parabolic operators $F_i$ as we are in smallest regime on the source term and on oscillation of coefficients of the respective operators. Next lemma is the key access point for the approach, as it provides the first step in the iteration process to be implemented.

\begin{lemma}\label{lemma3.1} Let $u \in C^{0}(Q_1)$ be a normalized $L^P$-viscosity solution for \eqref{Meq}, that is, $|u|\leq 1$ in $Q_1$. Given $0<\gamma<1$, there exist $\eta(\Lambda, \lambda, n, \gamma) >0$ and $0<\rho(\Lambda, \lambda, n, \gamma)\ll\frac{1}{2}$, such that if

$$
   \displaystyle \max\left\{\left(\intav{Q_1} \Theta^{P}_F(X, t)\right)^{\frac{1}{P}},  \|f\|_{L^{p, q}(Q_1)}, \upsilon\right\} \leq \eta \quad  \mbox{with} \quad 1<\frac{n}{p} + \frac{2}{q}< \frac{n+2}{n_P}
$$
then, for some $\varsigma \in \mathbb{R}$, with $|\varsigma| \leq C(\Lambda, \lambda,n)$ there holds
\begin{equation}\label{eq3.1}
    \displaystyle \sup_{Q_{\rho}} |u - \varsigma| \leq \rho^{\gamma}.
\end{equation}
\end{lemma}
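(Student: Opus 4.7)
The plan is to combine the $F$-caloric approximation from Lemma \ref{Lemma1} with the universal interior regularity available for constant-coefficient, homogeneous fully nonlinear parabolic equations. Heuristically, given the target Hölder exponent $\gamma<1$, I will produce an $F$-caloric $h$ that is $\delta$-close to $u$ on $Q_{1/2}$, and then take $\varsigma := h(0,0)$. The universal parabolic Lipschitz control on $h$ at the origin, combined with a small enough radius $\rho$, will force $|u-\varsigma|$ below the threshold $\rho^{\gamma}$ on $Q_\rho$.

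Concretely, I would fix $\delta>0$ to be chosen later and apply Lemma \ref{Lemma1} to obtain $\eta=\eta(n,\Lambda,\lambda,\delta)$ and $h\in C^{0}(Q_{1/2})$ solving
\begin{equation*}
h_t-\mathfrak{F}(D^2 h)=0\quad\text{in } Q_{1/2}
\end{equation*}
in the $L^P$-viscosity sense, with $\sup_{Q_{1/2}}|u-h|\le\delta$, where $\mathfrak{F}$ is a $(\lambda,\Lambda,0)$-parabolic, constant-coefficient operator. Since $\|h\|_{L^{\infty}(Q_{1/2})}\le 1+\delta\le 2$, the interior $C^{1+\alpha_0,\frac{1+\alpha_0}{2}}$ theory of Wang \cite{Wang 1, Wang 2} for such operators furnishes a universal constant $C_0=C_0(n,\lambda,\Lambda)$ with
\begin{equation*}
|h(X,t)-h(0,0)|\le C_0\, d_{\text{par}}\bigl((X,t),(0,0)\bigr)\le C_0\sqrt{2}\,\rho\quad\text{for every }(X,t)\in Q_\rho,\ \rho\le 1/4.
\end{equation*}
Setting $\varsigma:=h(0,0)$, one has $|\varsigma|\le 2$ and, by the triangle inequality,
\begin{equation*}
\sup_{Q_\rho}|u-\varsigma|\le \sup_{Q_\rho}|u-h|+\sup_{Q_\rho}|h-h(0,0)|\le \delta + C_0\sqrt{2}\,\rho.
\end{equation*}

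To close the argument I would first pick $\rho=\rho(n,\lambda,\Lambda,\gamma)\in(0,1/4)$ so small that $C_0\sqrt{2}\,\rho\le \tfrac{1}{2}\rho^{\gamma}$; this is possible precisely because $\gamma<1$ leaves a positive gap $1-\gamma>0$ in the exponents. With $\rho$ fixed, I would set $\delta:=\tfrac{1}{2}\rho^{\gamma}$ and extract the matching $\eta(n,\lambda,\Lambda,\gamma)$ from Lemma \ref{Lemma1}. These choices give $\sup_{Q_\rho}|u-\varsigma|\le \rho^{\gamma}$, which is the claim.

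The main technical point is the universality of the Lipschitz-type estimate for $h$: it depends on $\mathfrak{F}$, extracted in Lemma \ref{Lemma1}, being a genuine constant-coefficient fully nonlinear parabolic operator, so that Wang's $C^{1+\alpha_0,\frac{1+\alpha_0}{2}}$ theory applies with universal constants. The upper bound $\kappa<(n+2)/n_P$ enters only through the requirement $\min\{p,q\}>n_P$, which makes the compactness underlying Lemma \ref{Lemma1} and the $L^P$-viscosity stability available. The constraint $\gamma<1$ is structurally essential, since otherwise the balance $C_0\rho\le\rho^{\gamma}/2$ cannot be arranged for small $\rho$; this is fully consistent with the sharp Hölder exponent $\alpha=2-\kappa<1$ asserted in the introduction.
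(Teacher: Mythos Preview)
Your proof is correct and follows essentially the same route as the paper: apply the $F$-caloric approximation Lemma \ref{Lemma1} to get $h$ that is $\delta$-close to $u$, set $\varsigma=h(0,0)$, use the universal parabolic Lipschitz estimate for $h$ to bound $\sup_{Q_\rho}|u-\varsigma|\le \delta+C\rho$, and then choose $\rho$ and $\delta$ so that each term is at most $\tfrac12\rho^\gamma$. The only differences are cosmetic (your explicit $\sqrt{2}$ factor and the discussion of where the constraints $\gamma<1$ and $\kappa<(n+2)/n_P$ enter).
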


\begin{proof} For a $\delta > 0$ to be chosen \textit{a posteriori}, let $h$ be a solution to a homogeneous uniformly parabolic equation with constant coefficients, that is $\delta$-close to $u$ in the $L^{\infty}$-norm, i.e.,
\begin{equation}\label{eq3.2}
   \displaystyle h_t - \mathfrak{F}(D^2 h)=0 \quad \mbox{in} \quad Q_1 \quad \text{and} \quad \sup_{Q_{1/2}}|(u-h)(X, t)|\leq \delta.
\end{equation}
Lemma \ref{Lemma1} assures the existence of such a
function. Once our choice for $\delta$, to be set of the end of this proof, is
universal, then the choice of $\eta(n, \lambda, \Lambda, \delta)$ is too universal.
From the regularity theory available for $h$,  see for instance \cite{CKS} or \cite{Wang 2}, we can estimate

\begin{equation}\label{eq3.3}
    \displaystyle  |h(X, t)-h(0, 0)|\leq C(n, \lambda, \Lambda) d_{\text{par}}((X,t),(0,0)) \quad \forall \quad |X|^2+|t| < \frac{1}{3},
\end{equation}
and also,
\begin{equation}\label{eq3.4}
   |h(0,0)| \leq C.
\end{equation}
For $\varsigma = h(0, 0)$ it follows from equations \eqref{eq3.2} and \eqref{eq3.3} via triangular
inequality that
\begin{equation}\label{eq3.5}
    \displaystyle \sup_{Q_{\rho}} |u(X, t)- \varsigma|\leq \delta + C(n, \lambda, \Lambda) \rho.
\end{equation}
We make the following universal selections:
\begin{equation}\label{choose}
   \displaystyle \rho \defeq \left(\frac{1}{2C}\right)^{\frac{1}{1- \gamma}} \quad and  \quad  \delta  \defeq \frac{1}{2}\rho^{\gamma}
\end{equation}
where $C>0$ is a universal constant from equation \eqref{eq3.3}. Let us stress that the choices above depend only upon dimension, ellipticity parameters and the fixed exponent. From the above choices we obtain
$$
  \begin{array}{ll}
     \displaystyle \sup_{Q_{\rho}} |u(X, t)-\varsigma| & \leq {\rho}^{\gamma} \\
  \end{array}.
$$
and the Lemma is concluded.	
\end{proof}

\begin{theorem}\label{thm3.2} Let $u \in C^{0}(Q_1)$ be an $L^P$-viscosity solution of \eqref{Meq} with $f \in L^{p, q}(Q_1)$ with
$$
	1<\frac{n}{p} + \frac{2}{q}< \frac{n+2}{n_P}.
$$
There exists a universal constant $\theta_0>0$ such that if
$$
	\displaystyle \sup_{(Y, \tau) \in Q_{1/2}}  \left(\intav{Q_1} \Theta^P_F(Y, \tau, X, t)\right)^{\frac{1}{P}} \leq \theta_0,
$$
then, for a universal constant $C>0$ and $\alpha \defeq 2-\left(\frac{n}{p} + \frac{2}{q}\right)$, there holds

$$
   \displaystyle \|u\|_{C^{\alpha, \frac{\alpha}{2}}(Q_{1/2})} \leq C\{\|u\|_{L^{\infty}(Q_1)}+ \|f\|_{L^{p, q}(Q_1)}\}.
$$

\end{theorem}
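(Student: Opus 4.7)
The plan is to iterate Lemma \ref{lemma3.1} geometrically at the sharp exponent $\gamma = \alpha := 2 - (n/p + 2/q)$ and recover a discrete Campanato-type modulus of continuity centered at an arbitrary point $(Y_0,\tau_0)\in Q_{1/2}$. First I would translate the origin to $(Y_0,\tau_0)$ and, using Remark \ref{Rem2}, rescale so that $\|u\|_{L^\infty(Q_1)}\le 1$ and $\upsilon+\|f\|_{L^{p,q}(Q_1)}$ is as small as needed; the uniform-in-$(Y_0,\tau_0)$ oscillation hypothesis of the theorem survives this reduction. The $C^{\alpha,\alpha/2}$ seminorm bound then follows from combining the pointwise control at the new origin with the dependence on $\|u\|_{L^\infty}$ and $\|f\|_{L^{p,q}}$ picked up during normalization, and varying $(Y_0,\tau_0)$ throughout $Q_{1/2}$.

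The core of the argument is the inductive claim that there exist universal $\rho\in(0,1/2)$ and $C_0$ (coming from Lemma \ref{lemma3.1} applied with $\gamma=\alpha$, noting that $\alpha\in(0,1)$ under the hypothesis on $\kappa$) and a sequence $\varsigma_k\in\mathbb{R}$ with $\varsigma_0=0$, $|\varsigma_{k+1}-\varsigma_k|\le C_0\rho^{k\alpha}$, such that
\[
    \sup_{Q_{\rho^k}} |u-\varsigma_k|\le \rho^{k\alpha}.
\]
For the inductive step at level $k$, introduce the rescaling
\[
    v_k(X,t) := \rho^{-k\alpha}\bigl[u(\rho^k X,\rho^{2k}t)-\varsigma_k\bigr],
\]
which by hypothesis satisfies $|v_k|\le 1$ in $Q_1$ and $(v_k)_t - F_k(D^2 v_k, Dv_k, X, t) = f_k(X,t)$ with
\[
    F_k(M,Z,X,t) := \rho^{(2-\alpha)k}\,F\!\bigl(\rho^{(\alpha-2)k}M,\,\rho^{(\alpha-1)k}Z,\,\rho^k X,\,\rho^{2k}t\bigr), \qquad f_k(X,t):=\rho^{(2-\alpha)k}f(\rho^k X,\rho^{2k}t).
\]
By 1-homogeneity of Pucci and direct inspection, $F_k$ is $(\lambda,\Lambda,\rho^k\upsilon)$-parabolic (so the gradient Lipschitz constant \emph{shrinks}), and the oscillation coefficient obeys $\Theta_{F_k}(X,t)\le \Theta_F(\rho^k X,\rho^{2k}t)$. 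Applying Lemma \ref{lemma3.1} to $v_k$ produces some $\varsigma'$ with $|\varsigma'|\le C_0$ and $\sup_{Q_\rho}|v_k-\varsigma'|\le\rho^\alpha$; setting $\varsigma_{k+1}:=\varsigma_k+\rho^{k\alpha}\varsigma'$ closes the induction.

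The critical, and sharp, computation concerns the behaviour of the mixed-norm source under parabolic scaling: a change of variables $Y=\rho^k X$, $s=\rho^{2k}t$ yields
\[
    \|f_k\|_{L^{p,q}(Q_1)} = \rho^{k(2-\alpha-n/p-2/q)}\,\|f\|_{L^{p,q}(Q_{\rho^k})} = \|f\|_{L^{p,q}(Q_{\rho^k})} \le \|f\|_{L^{p,q}(Q_1)},
\]
the exponent vanishing precisely because $\alpha = 2-(n/p+2/q)$. This is the unique value of $\gamma$ for which the smallness of the source is preserved along the whole iteration, which simultaneously explains why $\alpha$ is the sharp exponent and why the restriction $\kappa<(n+2)/n_P$ is needed to keep us inside the range where Lemma \ref{Lemma1} and Proposition \ref{CompSol} apply.

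Once the induction is in place, the increments $\varsigma_{k+1}-\varsigma_k$ form a geometric series, so $\varsigma_k\to\varsigma_\infty$ with $|\varsigma_k-\varsigma_\infty|\lesssim\rho^{k\alpha}$; by continuity $\varsigma_\infty=u(0,0)$. For arbitrary $(X,t)\in Q_1$, selecting $k$ with $\rho^{k+1}\le d_{\text{par}}((X,t),(0,0))\le\rho^k$ and combining the triangle inequality with the inductive bound delivers the pointwise parabolic $\alpha$-H\"older estimate at $(0,0)$. Translating the center to any $(Y_0,\tau_0)\in Q_{1/2}$ and invoking the uniform oscillation hypothesis completes the proof. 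I expect the main obstacle to be the book-keeping of all three smallness conditions along the iteration, and most subtly the control of $\intav{Q_1}\Theta_{F_k}^P$, which requires using the hypothesis in its scale-invariant form (i.e.\ centered at every point of $Q_{1/2}$) rather than only at the initial scale.
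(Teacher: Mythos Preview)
Your proposal is correct and follows essentially the same iteration-of-Lemma~\ref{lemma3.1} scheme as the paper's proof, including the rescaling $v_k$, the rescaled operator $F_k$, and the key observation that the choice $\alpha = 2 - (n/p + 2/q)$ is exactly what makes $\|f_k\|_{L^{p,q}(Q_1)} = \|f\|_{L^{p,q}(Q_{\rho^k})}$. Your remark that the gradient Lipschitz constant actually shrinks to $\rho^k\upsilon$ is a slight sharpening (the paper only records it as $\le \upsilon$), and your closing caveat about controlling $\intav{Q_1}\Theta_{F_k}^P$ in a scale-invariant way is well taken---the paper's inequality $\bigl(\intav{Q_1}\Theta_{F_k}^P\bigr)^{1/P}\le\bigl(\intav{Q_1}\Theta_F^P\bigr)^{1/P}$ tacitly uses exactly the uniformity over centers and scales that you flag.
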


\begin{proof} Through normalization and scaling processes, see Remark \ref{Rem2}, we can suppose without losing generality that $|u|\leq 1$ and $\|f\|_{L^{p, q}(Q_1)} \leq \eta$, where $\eta$ is the universal constant from Lemma \ref{lemma3.1} when we set $\gamma = \xi(n , p, q) = 2-\left(\frac{n}{p} + \frac{2}{q}\right)$. Once selected $\theta_0  = \eta$ the goal will be to iterate the Lemma \ref{lemma3.1}. For a fixed $(Y, \tau) \in Q_{1/2}$ we claim that there exists a convergent sequence of real numbers $\{\varsigma_k\}_{k \geq 1}$, such that
\begin{equation}\label{eq3.7}
    \displaystyle \sup_{Q_{\rho^k}(Y, \tau)} |u(X, t) - \varsigma_k| \leq \rho^{k\xi(n , p, q)}
\end{equation}
where the radius $0<\rho \ll \frac{1}{2}$ is given by Lemma \ref{lemma3.1}, upon the selection of $\gamma$ as above.

The proof of \eqref{eq3.7} will follow by induction process. Lemma\ref{lemma3.1} gives the first step of induction, $k = 1$.  Now suppose verified the $k^{th}$ step in \eqref{eq3.7}. We define
$$
    \displaystyle v_k(X, t) =  \frac{u(Y + \rho^k X, \tau + \rho^{2k}t)}{\rho^{k \xi(n , p, q)}}
$$
and
$$
   F_k(M, Z,  X, t) \defeq \rho^{k \left[2- \xi(n , p, q)\right]} F\left(\frac{1}{ \rho^{k \left[2- \xi(n , p, q)\right]}}M, \frac{1}{\rho^{k \left[1- \xi(n , p, q)\right]}}Z, Y+ \rho^kX, \tau+ \rho^{2k}t\right).
$$
As commented before, see Remark \ref{Rem1}, $F_k$ is $(\lambda, \Lambda, \upsilon)$-parabolic operator, moreover  by the induction hypothesis, $|v_k|\leq 1$and
$$
   (v_k)_t - F_k(D^2v_k, Dv_k, X,t) = \rho^{k. \left[2- \xi(n , p, q)\right]} f(Y + \rho^k X,  \tau + \rho^{2k}t)  =: f_k(X, t),
$$
in the $L^P$-viscosity sense. One easily computes,
$$
   \begin{array}{rcc}
        \|f_k\|_{L^{p, q}(Q_1)}  & = & \displaystyle \rho^{k({2-\xi(n,p,q)})}\rho^{-k\left(\frac{n}{p} + \frac{2}{q}\right)}\left(\int_{\tau-\rho^{2k}}^{\tau} \left(\int_{B_{\rho^{k}}(Y) } |f(Z,s)|^p dZ\right)^{\frac{q}{p}}ds\right)^{\frac{1}{q}}.
    \end{array}
$$
Due to the sharp choice of $\xi(n , p, q) = 2-\left(\frac{n}{p} + \frac{2}{q}\right)$, we have that
$$
    \|f_k\|_{L^{p, q}(Q_1)} = \|f\|_{L^{p, q}(B_{\rho^k}(Y) \times (\tau - \rho^{2k},  \tau])} \leq \|f\|_{L^{p, q}(Q_1)} \leq \eta,
$$
as well as
$$
    \displaystyle \left(\intav{Q_1} \Theta^P_{F_k}(X, t)\right)^{\frac{1}{P}} \leq \left(\intav{Q_1} \Theta^P_F(X, t)\right)^{\frac{1}{P}}  \leq  \eta.
$$
In conclusion, we are allowed to employed Lemma \ref{lemma3.1} to $v_k$, which provides the existence of a universally bounded real number $\overline{\varsigma}_k$ with $|\overline{\varsigma}_k| \leq C$, such that
\begin{equation}\label{eq3.8}
     \displaystyle \sup_{Q_{\rho}} |v_k - \overline{\varsigma}_k| \leq \rho^{\xi(n, p, q)}.
\end{equation}
Finally, if we select
\begin{equation}\label{eq3.9}
     \displaystyle \varsigma_{k+1} \defeq \varsigma_k + \rho^{k \xi(n, p, q)}\overline{\varsigma}_k
\end{equation}
and rescale \eqref{eq3.8} back to the unit picture, we obtain the $(k+1)^{th}$ step in the induction process \eqref{eq3.7}. In addition, we have that
\begin{equation}\label{eq3.10}
     \displaystyle |\varsigma_{k+1} - \varsigma_k|\leq C \rho^{k \xi(n, p, q)},
\end{equation}
and hence the sequence $\{\varsigma_k\}_{k \geq 1}$ is Cauchy, and so it converges. From \eqref{eq3.7} $\varsigma_k \rightarrow u(Y, \tau)$. As well as from \eqref{eq3.10} it follows that
\begin{equation}\label{eq3.11}
     \displaystyle |u(Y, \tau) - \varsigma_k|\leq \frac{C}{1- \rho^{ \xi(n, p, q)}} \rho^{k \xi(n, p, q)},
\end{equation}

Finally, for $0 < r < \rho$, let $k$ the smallest integer such that $(X,t) \in Q_{\rho^k}(Y, \tau)\setminus Q_{\rho^{k+1}}(Y, \tau)$. It follows from \eqref{eq3.7} and \eqref{eq3.11} that
$$
   \begin{array}{ll}
        \displaystyle \sup_{Q_r(Y,\tau)}  \frac{|u(X,t)-u(Y,\tau)|}{d_{\text{par}}((X,t),(Y, \tau))^{\xi(n, p, q)}} & \displaystyle \leq  \sup_{Q_r(Y,\tau)} \frac{|u(X,t)-\varsigma_k| + |u(Y,\tau) - \varsigma_k|}{d_{\text{par}}((X,t),(Y, \tau))^{\xi(n, p, q)}}\\
        & \displaystyle \leq \left(1 + \frac{C}{1- \rho^{\xi(n, p, q)}}\right)\sup_{Q_r(Y,\tau)} \frac{ {\rho}^{k \xi(n, p, q)}}{d_{\text{par}}((X,t),(Y, \tau))^{\xi(n, p, q)}} \\
        & \displaystyle \leq \left(1 + \frac{C}{1- \rho^{\xi(n, p, q)}}\right)\frac{1}{\rho^{\xi(n, p, q)}}.
   \end{array}
$$
The last estimate provides
$$
    \displaystyle \|u\|_{C^{\xi(n, p, q), \frac{\xi(n, p, q)}{2}}(Q_{1/2})} \leq C
$$
and hence the proof of Theorem is concluded.
\end{proof}

\begin{remark} The exponent of H\"{o}lder regularity of our result is sharp. This is can be verify through of following example from \cite{TU}: Let $u \in C_{loc}((-1,0]; L_{loc}^2(B_1)) \cap L_{loc}^2((-1,0];W_{loc}^{1,2}(B_1))$ be a weak solution to
$$
u_t- \Delta u  = f \quad in \quad Q_1
$$
Suppose that $1< \frac{n}{p} + \frac{2}{q}<2$ then for $\alpha \defeq 2-\left(\frac{2}{p}+\frac{n}{q}\right)$ we have that $u \in C_{loc}^{\alpha, \frac{\alpha}{2}}(Q_1)$. Remark that in this case $n_P = \frac{n+2}{2}$.

\end{remark}

\begin{remark} Under VMO assumption of the coefficients of the operator $F$:
$$
	 \displaystyle \intav{Q_r} \Theta^P_F(X, t)  = \text{o}(1),
$$
as $r\to 0$, Theorem \ref{thm3.2} holds without the smallness oscillation condition, as it can always be assumed upon an appropriate scaling.
\end{remark}

\begin{remark}

Under no assumptions on the coefficients, rather than ellipticity, adjustments in the proof of previous Theorem yields
$C_{loc}^{\alpha, \frac{\alpha}{2}}(Q_1)$
where $\alpha \defeq \min\left\{\beta^{-}, 2-\left(\frac{n}{p} + \frac{2}{q}\right)\right\}$ where  $0<\beta<1$ is the maximal exponent from Preposition \ref{CompSol}.
\end{remark}


\section{Parabolic Log-Lipschitz type estimates}\label{LLR}

\hspace{0.6cm} In this section we address the question of finding the optimal and universal modulus of
continuity for solutions of uniformly parabolic equations of the form \eqref{Meq} whose right hand side lies in the borderline space $L^{p,q}(Q_1)$, when $p$ and $q$ lie on the critical surface:
$$
	\frac{n}{p} + \frac{2}{q}=1.
$$
Such estimate is particularly important to the general theory of fully nonlinear parabolic equations. Through a simple analysis one verifies that solutions of \eqref{Meq}, with sources under the above borderline integrability condition should be asymptotically Lipschitz continuous. Indeed, as $\frac{n}{p} + \frac{2}{q} \to 1^{+}$, solutions are parabolically H\"older continuous for every exponent $0< \alpha < 1$. The key goal in this section is to obtain the sharp, quantitative modulus of continuity for $u$.

\begin{lemma}\label{lemma4.1} Let $u \in C^{0}(Q_1)$ be a normalized $L^P$-viscosity solution to \eqref{Meq}. There exist $\eta(\Lambda, \lambda, n) > 0$ and $0< \rho(\Lambda, \lambda, n) \ll \frac{1}{2}$, such that if

\begin{equation}\label{eq4.2}
    \displaystyle \max\left\{\left(\intav{Q_1} \Theta^P_F(X, t)\right)^{\frac{1}{P}},  \|f\|_{L^{p, q}(Q_1)}, \upsilon\right\} \leq \eta
\end{equation}
under the condition $\frac{n}{p} + \frac{2}{q}=1$, then, we can find an affine function $L(X,t) \defeq A+ \langle B, X \rangle$, with universally bounded coefficients, $|A|+|B| \leq C(\lambda, \Lambda, n)$, such that
\begin{equation}\label{eq4.3}
    \displaystyle \sup_{Q_{\rho}} |(u - L)(X,t)| \leq \rho.
\end{equation}
\end{lemma}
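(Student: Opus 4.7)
The strategy mirrors Lemma \ref{lemma3.1}, but now we approximate $u$ by an \emph{affine} function in space rather than a constant, which is what lets us beat the exponent $\gamma<1$ of the previous lemma and produce the sharp Lipschitz-type decay needed at the critical surface.

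First, for a small $\delta>0$ to be fixed below, I would invoke the $F$-caloric approximation Lemma \ref{Lemma1} to produce a function $h\colon Q_{1/2}\to\R$ satisfying $h_t-\mathfrak{F}(D^2h)=0$ in $Q_{1/2}$ in the $L^P$-viscosity sense, for some $(\lambda,\Lambda,0)$-parabolic constant-coefficient operator $\mathfrak{F}$, and with $\sup_{Q_{1/2}}|u-h|\le \delta$. This requires $\eta=\eta(n,\lambda,\Lambda,\delta)$ to be chosen sufficiently small in \eqref{eq4.2}.

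Next, Wang's interior regularity theorem \cite{Wang 2} for homogeneous constant-coefficient fully nonlinear parabolic equations supplies a universal exponent $\alpha=\alpha(n,\lambda,\Lambda)\in(0,1)$ and a universal constant $C_0>0$ for which $h\in C^{1+\alpha,\frac{1+\alpha}{2}}(Q_{1/3})$ with
\begin{equation*}
    |h(0,0)|+|Dh(0,0)|\le C_0\|h\|_{L^\infty(Q_{1/2})}\le 2C_0,
\end{equation*}
together with the first-order Taylor bound
\begin{equation*}
    |h(X,t)-h(0,0)-\langle Dh(0,0),X\rangle| \le C_0\, d_{\text{par}}((X,t),(0,0))^{1+\alpha}
\end{equation*}
valid on a fixed neighborhood of the origin. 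Setting $L(X,t):=h(0,0)+\langle Dh(0,0),X\rangle=:A+\langle B,X\rangle$, the triangle inequality yields
\begin{equation*}
    \sup_{Q_\rho}|u-L| \le \sup_{Q_\rho}|u-h|+\sup_{Q_\rho}|h-L| \le \delta+C_0\,\rho^{1+\alpha}.
\end{equation*}

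Finally, I would make the universal choices $\rho:=(2C_0)^{-1/\alpha}$ and $\delta:=\rho/2$, so that $\delta+C_0\rho^{1+\alpha}\le \rho$, and then take $\eta$ to be the constant from Lemma \ref{Lemma1} corresponding to this $\delta$; both $\rho$ and $\eta$ depend only on $n,\lambda,\Lambda$, while the affine coefficients satisfy $|A|+|B|\le 2C_0$, which is universal. The main obstacle I expect to handle carefully is justifying the universal $C^{1+\alpha,\frac{1+\alpha}{2}}$ regularity of the limiting profile $h$ with constants depending only on ellipticity and dimension --- this is precisely Wang's parabolic analogue of the Caffarelli--Krylov--Safonov theory, and it is the source of the \emph{gain of one derivative} over Lemma \ref{lemma3.1}. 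Note that the borderline assumption $\frac{n}{p}+\frac{2}{q}=1$ plays no role in this single-step statement; it becomes crucial in the subsequent iteration, where it is exactly the scaling that preserves the $L^{p,q}$-norm of the rescaled sources and ultimately forces the affine corrections to accumulate with a logarithmic loss, producing the Log-Lipschitz modulus.
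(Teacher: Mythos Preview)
Your proof is correct and follows essentially the same approach as the paper's: approximate by an $F$-caloric function via Lemma~\ref{Lemma1}, take its first-order Taylor polynomial $L(X,t)=h(0,0)+\langle Dh(0,0),X\rangle$, use the universal $C^{1+\alpha_F,\frac{1+\alpha_F}{2}}$ estimate for $h$, and make the same universal choices $\rho=(2C)^{-1/\alpha_F}$, $\delta=\rho/2$. Your closing remark that the borderline condition $\frac{n}{p}+\frac{2}{q}=1$ is irrelevant here and only enters in the iteration is also exactly how the paper uses it.
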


\begin{proof} For a $\delta > 0$ which will be chosen \textit{a posteriori}, we apply Lemma \ref{Lemma1} and find a function $h\colon Q_{\frac{1}{2}} \rightarrow \mathbb{R}$ satisfying
$$
h_t - \mathfrak{F}(D^2h) = 0 \quad \mbox{in} \quad Q_{\frac{1}{2}},
$$
in the $L^P$-viscosity sense such that
\begin{equation}\label{eq4.4}
    \displaystyle \sup_{Q_{1/2}} |(u-h)(X, t)|\leq \delta.
\end{equation}
We now define
\begin{equation}\label{eq4.5}
    L(X,t) = h(0,0) + \langle D h(0,0), X\rangle,
\end{equation}
and apply the regularity theory available for $h$,  see for instance \cite{CKS} or \cite{Wang 2}, as to assure the existence of a universal constants $0<\alpha_F<1$ and $C > 0$ such that
\begin{equation}\label{eq4.6}
    \displaystyle |(h - L)(X, t)|\leq Cd_{\text{par}}((X,t), (0,0))^{1+\alpha_F}, \quad \forall  \ 0< |X|^2+|t| < \frac{1}{3}.
\end{equation}
It is time to make universal choices: we set
\begin{equation}\label{eq4.7}
    \displaystyle \rho \defeq \left(\frac{1}{2C}\right)^{\frac{1}{\alpha_F}}< \frac{1}{2} \quad \mbox{and} \quad \delta \defeq \frac{1}{2} \rho,
\end{equation}
which decides the value of $\eta(\Lambda, \lambda, n)>0$ through  the approximation Lemma \ref{Lemma1}. It the sequel we estimate
$$
    \displaystyle \sup_{(X, t) \in Q_{\rho}} |(u-L)(X, t)| \leq \sup_{(X, t) \in Q_{\rho}} |(u-h)(X, t)| + \sup_{(X, t) \in Q_{\rho}} |(h-L)(X, t)| \leq \rho,
$$
and the proof is complete.
\end{proof}


\begin{theorem}\label{thm4.2} Let $u \in C^0(Q_1)$ be an $L^P$-viscosity solution to \eqref{Meq}. There exists a universal constant $\theta_0 >0$ such that if
$$
	\displaystyle \sup_{(Y, \tau) \in Q_{1/2}}  \left(\intav{Q_1} \Theta^P_F(Y, \tau, X, t)\right)^{\frac{1}{P}}   \leq \theta_0,
$$
then, for a universal constant $C>0$ and any $(X,t), (Y, \tau) \in Q_{1/2}$, there holds

$$
    |u(X, t) - u(Y, \tau)| \leq C\{\|u\|_{L^{\infty}(Q_1)} + \|f\|_{L^{p,q}(Q_1)}\}.\omega(d_{\text{par}}((X, t),(Y, \tau))),
$$
where $\omega(s) \defeq s \log \frac{1}{s}$ is the Lipschitz logarithmical modulus of continuity.
\end{theorem}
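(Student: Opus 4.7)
My plan is to iterate Lemma \ref{lemma4.1} at a fixed point $(Y,\tau)\in Q_{1/2}$ in dyadic parabolic cylinders $Q_{\rho^k}(Y,\tau)$, producing a sequence of affine (in space) approximations
$$L_k(X,t) \defeq A_k + \langle B_k, X-Y\rangle,$$
such that $\sup_{Q_{\rho^k}(Y,\tau)}|u-L_k|\le \rho^k$. The point is that, while the constant parts $A_k$ will converge geometrically (giving the value $u(Y,\tau)$), the slopes $B_k$ are allowed to \emph{grow linearly} in $k$, and when measured against the scale $r\approx \rho^k$, this linear drift produces exactly the $\log(1/r)$ factor in the modulus of continuity.

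Preliminary normalization: via Remark \ref{Rem2}, assume $\|u\|_{L^\infty(Q_1)}\le 1$, $\upsilon+\|f\|_{L^{p,q}(Q_1)}\le \eta$, and take $\theta_0\defeq \eta$, where $\eta$ is the universal constant from Lemma \ref{lemma4.1} (shrunk if needed, as below). The base case $k=1$ is exactly Lemma \ref{lemma4.1}. For the inductive step, assuming $L_k$ has been constructed with $|A_k|\le C$ and $|B_k|\le Ck$, define
$$v_k(X,t) \defeq \frac{u(Y+\rho^k X,\tau+\rho^{2k}t)-L_k(Y+\rho^k X,\tau+\rho^{2k}t)}{\rho^k},\qquad (X,t)\in Q_1.$$
A direct computation shows $|v_k|\le 1$ in $Q_1$ and
$$(v_k)_t - F_k(D^2 v_k, Dv_k, X, t) = f_k(X,t),$$
where $F_k(M,Z,X,t)\defeq \rho^k F(\rho^{-k}M,\,Z+B_k,\,Y+\rho^k X,\,\tau+\rho^{2k}t)$ is again $(\lambda,\Lambda,\upsilon)$-parabolic and $f_k(X,t)\defeq \rho^k f(Y+\rho^k X,\tau+\rho^{2k}t)$. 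The critical identity $\tfrac{n}{p}+\tfrac{2}{q}=1$ gives the scale-invariance
$$\|f_k\|_{L^{p,q}(Q_1)} = \rho^{k(1-\frac{n}{p}-\frac{2}{q})}\|f\|_{L^{p,q}(Q_{\rho^k}(Y,\tau))} \le \|f\|_{L^{p,q}(Q_1)}\le \eta,$$
so the source hypothesis propagates without loss. For the coefficient oscillation, I use the Lipschitz dependence of $F$ in the gradient to bound
$$\bigl(\intav_{Q_1}\Theta_{F_k}^P\bigr)^{1/P} \;\le\; \bigl(\intav_{Q_{\rho^k}(Y,\tau)}\Theta_F(0,0;Z,s)^P\, dZds\bigr)^{1/P} + 2\rho^k\upsilon|B_k|;$$
the first term is $\le \theta_0$ by hypothesis, and the drift $\rho^k\upsilon |B_k|\lesssim \rho^k k$ stays bounded (indeed tends to $0$) since $\rho<1$, so by possibly shrinking $\theta_0$ once and for all, the full smallness assumption \eqref{eq4.2} holds for $F_k, f_k$. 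Lemma \ref{lemma4.1} then yields an affine $\tilde L(X)=\tilde A+\langle \tilde B,X\rangle$ with $|\tilde A|+|\tilde B|\le C(\lambda,\Lambda,n)$ and $\sup_{Q_\rho}|v_k-\tilde L|\le \rho$. Unscaling, this is precisely the claim at step $k+1$ with
$$A_{k+1}=A_k+\rho^k\tilde A,\qquad B_{k+1}=B_k+\tilde B,$$
so $|A_{k+1}-A_k|\le C\rho^k$ and $|B_{k+1}-B_k|\le C$, preserving the inductive bounds.

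It follows that $A_k\to u(Y,\tau)$ with $|u(Y,\tau)-A_k|\le C\rho^k/(1-\rho)$ and $|B_k|\le Ck$. For the modulus of continuity, take $(X,t),(Y,\tau)\in Q_{1/2}$ with $r\defeq d_{\text{par}}((X,t),(Y,\tau))<\rho$, and pick the integer $k$ with $\rho^{k+1}\le r<\rho^k$. Then $(X,t)\in Q_{\rho^k}(Y,\tau)$ and triangular inequality gives
$$|u(X,t)-u(Y,\tau)| \;\le\; |u(X,t)-L_k(X,t)| + |A_k-u(Y,\tau)| + |B_k|\,|X-Y| \;\le\; C\rho^k + Ck\,r.$$
Since $k\le \log(1/r)/\log(1/\rho)$ and $\rho^k\le r/\rho$, this yields $|u(X,t)-u(Y,\tau)|\le C\,r\log(1/r)=C\,\omega(r)$. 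For $r\ge \rho$ the bound is trivial from $\|u\|_{L^\infty}\le 1$. Undoing the initial normalization reinstates the factor $\|u\|_{L^\infty(Q_1)}+\|f\|_{L^{p,q}(Q_1)}$.

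The main technical obstacle, as outlined above, is that the natural rescaled operator $F_k$ involves a shift by the possibly large slope $B_k$ in its gradient slot, which could in principle destroy the smallness of the coefficient oscillation. The argument survives because $\rho^k|B_k|\to 0$ (linear growth is defeated by geometric decay), so the Lipschitz perturbation in the gradient is harmless. This is exactly the quantitative manifestation of the fact that $\kappa=1$ sits at the borderline where Lipschitz regularity narrowly fails by a logarithmic amount.
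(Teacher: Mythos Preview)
Your proof is correct and follows essentially the same route as the paper's: iterate Lemma~\ref{lemma4.1} at dyadic scales $\rho^k$, track the linear growth $|B_k|\le Ck$, and convert $k\rho^k$ into $r\log(1/r)$. The only bookkeeping difference is that you absorb the slope shift $B_k$ into the rescaled operator $F_k$ and then control its contribution to $\Theta_{F_k}$ via the gradient-Lipschitz bound, whereas the paper keeps $F_k$ unshifted and instead places the shift into an additional source term $g_k$ with $|g_k|\le Ck\rho^k\upsilon$; both hinge on the identical observation that $k\rho^k\to 0$.
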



\begin{proof} We start off the proof by assuming, with no loss of generality, that    $|u|\leq 1$ and
$$
\|f\|_{L^{p, q}(Q_1)} < \frac{\eta}{4} \quad \mbox{and} \quad \upsilon < \frac{\eta}{8\max\{1, \mathcal{L}^n(B_1(0))\}},
$$
where $\eta = \eta(n, \lambda, \Lambda)$ the largest positive number such that the Lemma \ref{lemma4.1} holds. Choose $\theta_0 = \frac{\eta}{8}$. For a fixed $(Y, \tau) \in Q_{1/2}$ we will prove the existence of a sequence of affine functions
$$
    L_k(X, t) = A_k + \langle B_k, X - Y\rangle
$$
such that
\begin{equation}\label{eq4.8}
    \displaystyle \sup_{B_{\rho^k}(Y) \times (\tau -\rho^{2k}  , \tau]} |(u- L_k)(X, t)| \leq \rho^k
\end{equation}
and
\begin{equation}\label{eq4.9}
     \displaystyle |A_{k+1} - A_k| \leq C \rho^k \quad \mbox{and} \quad | B_{k+1} - B_k| \leq C,
\end{equation}
where $0<\rho \ll \frac{1}{2}$ is the radius given by Lemma \ref{lemma4.1}. Notice that the second estimate in \eqref{eq4.9} gives the growing estimate  on the linear coefficients of order
\begin{equation}\label{eq4.91}
     \displaystyle  | B_{k}| \leq Ck.
\end{equation}
We now argue by induction. Lemma \ref{lemma4.1} provides the first step and now we suppose that we have already verified the $k$th step of \eqref{eq4.8}. Define
$$
    v_k(X, t) \defeq \frac{(u-L_k)(Y+\rho^k X, \tau + \rho^{2k} t)}{\rho^k},
$$
which verifies $|v_k|\leq 1$ in $Q_1$, by the induction condition. Define
$$
    \displaystyle F_k(M, \overrightarrow{p}, X, t) := \rho^kF\left(\frac{M}{\rho^k}, \overrightarrow{p}, Y+ \rho^kX,\tau+ \rho^{2k}t\right).
$$
It is plain to check that $F_k$ is a $(\lambda, \Lambda, \upsilon)$-parabolic operator and

$$
    (v_k)_t - F_k(D^2v_k, D v_k, X, t) = f_k(X, t) + g_k(X, t) = H_k(X, t)
$$
in the $L^P$-viscosity sense, where
$$
f_k(X, t)\defeq \rho^kf( Y+\rho^k X, \tau + \rho^{2k} t)
$$
and
$$
g_k(X, t) \defeq \rho^k[F_k(D^2v_k, D v_k + B_k, X, t)-F_k(D^2v_k, D v_k, X, t)].
$$
Moreover,
$$
\begin{array}{cll}
      \|f_k\|_{L^{p, q}(Q_1)}  & = & \displaystyle  \rho^{k}\rho^{-k\left(\frac{n}{p} + \frac{2}{q}\right)}\left(\int_{\tau-\rho^{2k}}^{\tau} \left(\int_{B_{\rho^{k}}(Y) } |f(Z,s)|^p dZ\right)^{\frac{q}{p}}ds\right)^{\frac{1}{q}}.
\end{array}
$$
By the critical condition, $\frac{n}{p} + \frac{2}{q} = 1$, we verify that
$$
    \|f_k\|_{L^{p, q}(Q_1)} = \|f\|_{L^{p, q}(B_{\rho^k}(Y) \times (\tau - \rho^{2k},  \tau])}< \frac{\eta}{4}.
$$
Moreover, given the smallest regime on $\upsilon$, assumption \eqref{eq1} and \eqref{eq4.91}, we have
$$
     |g_k(X, t)|\leq  C k\rho^{k} \upsilon < \frac{\eta}{8\max\{1, \mathcal{L}^n(B_1(0))\}}.
$$
Thus,
$$
    \|g_k\|_{L^{p, q}(Q_1)} \leq \frac{\eta}{8\max\{1, \mathcal{L}^n(B_1(0))\}} \mathcal{L}^n(B_1(0))^{1/p}\leq \frac{\eta}{8}.
$$
Therefore,  $\|H_k\|_{L^{p, q}(Q_1)} < \frac{3\eta}{8} $ . Furthermore,
$$
    \displaystyle \left(\intav{Q_1} \Theta^P_{F_k}(X, t)\right)^{\frac{1}{P}} \leq \left(\intav{Q_1} \Theta^P_F(X, t)\right)^{\frac{1}{P}} \leq  \frac{\eta}{8}.
$$
We have verified that we can apply Lemma \ref{lemma4.1} to the function $v_k$, assuring the existence of an affine function $\widetilde{L}_k(X, t) = \widetilde{A}_k + \langle \widetilde{B}_k, X \rangle$  satisfying $|\widetilde{A}_k| , |\widetilde{B}_k| \leq C$, such that
\begin{equation}\label{eq4.10}
     \displaystyle \sup_{Q_{\rho}} |(v_k - \widetilde{L}_k)(X, t)| \leq \rho.
\end{equation}
We now define
\begin{equation}\label{eq4.11}
      \displaystyle A_{k+1} \defeq A_k + \rho^k \widetilde{A}_k  \quad \text{and} \quad B_{k+1} \defeq B_k + \widetilde{B}_k.
\end{equation}
Rescaling \eqref{eq4.10} to the unit picture gives the $(k+1)$th induction step. The first estimate in \eqref{eq4.8}  assures that the sequence $\{A_k\}_{k \geq 1}$ converges to $u(Y, \tau)$. Also we can estimate, by geometric series,
\begin{equation}\label{eq4.12}
    \displaystyle |u(Y, \tau) - A_k| \leq \frac{C \rho^k}{1-\rho}.
\end{equation}
Finally, for $0 < r < \rho$, let $k$ be the lowest integer such that
$$
    (X,t) \in Q_{\rho^k}(Y, \tau)\setminus Q_{\rho^{k+1}}(Y, \tau).
$$
It follows by \eqref{eq4.8}, \eqref{eq4.91} and \eqref{eq4.12} that
$$
\begin{array}{ll}
    \displaystyle \sup_{Q_r(Y, \tau)}  \frac{|u(X,t)-u(Y,\tau)|}{r\log r^{-1}} & \displaystyle \leq  \sup_{Q_{{\rho}^k}(Y, \tau)} \frac{|(u-L_k)(X,t)| + |u(Y,\tau) - A_k| + |B_k|\rho^k}{r\log r^{-1}}\\
    & \displaystyle \leq C \sup_{Q_{{\rho}^k}(Y, \tau)} \frac{k{\rho}^k}{r\log r^{-1}} \\
    & \displaystyle \leq C,
\end{array}
$$
and the proof of Theorem is conclude.
\end{proof}

\begin{remark}\label{remLL} As a consequence of the estimate given by Theorem \ref{thm4.2}, we are able to derive a precise integral behavior of the gradient of a solution to \eqref{Meq}. Indeed, one can derive the following pointwise control, say near $(0,0)$:
$$
   |D u(X,t)| \lesssim -C\log(|X|^2+|t|) \quad \forall |X|+|t| \ll \frac{1}{2}
$$
Under suitable smallness regime on $f \in L^{p,q}(Q_1)$ and on $\Theta_F \in L^{P}(Q_1)$, it follows by an adjustment of our arguments, combined with $W^{2, 1,s}$ interior estimates from \cite{CKS} that one can approximate an $L^P$-viscosity solution of \ref{Meq} by an $F$-caloric function
$$
    h_t-\mathfrak{F}(D^2h, X, t) = 0 \quad \mbox{in} \quad Q_{1/2},
$$
in the $W^{2, 1, s}(Q_{1/2})$ topology. Thus, through an iterative process as indicated in the proof of Theorem \ref{thm4.2}, one can find affine functions $L_k$ such that
$$
    \displaystyle \intav{Q_{\rho^k}} |D(u-L_k)(X, t)|^sdXdt \leq 1.
$$
Therefore, it is possible to establish BMO type of estimates for the gradient in terms of the $L^{p, q}(Q_1)$ norm of $f$, when the critical condition $\frac{n}{p} + \frac{2}{q}=1$ is verified. That is,
$$
    \|D u\|_{\textrm{BMO}(Q_r)} \leq C[\|u\|_{L^{\infty}(Q_1)}+ \|f\|_{L^{p, q}(Q_1)}], \quad \mbox{for} \quad 0<r\ll\frac{1}{4}.
$$
Comparing such an estimate with the results from \cite{CKS}, it synthesizes quantitatively the fact of
$$
	\displaystyle|D u| \in \bigcap_{p \geq 1} L_{loc}^p(Q_1).
$$
since $L^P$-viscosity solutions have its gradient in $L_{loc}^s(Q_1)$ for all $s< \frac{n+2}{\frac{n}{p}+\frac{2}{q}-1}$.
\end{remark}


\section{Optimal $C^{1+ \alpha, \frac{1+ \alpha}{2}}$ regularity}\label{DR}

\hspace{0.65cm} In this section we obtain asymptotically sharp $C^{1+ \sigma, \frac{1+ \sigma}{2}}$ interior regularity estimates for solutions of \eqref{Meq}. Such estimates are already available in the literature, see for instance \cite{CKS} and \cite{Wang 1}. We shall only comment on how we can deliver them by means of the arguments designed in Section \ref{LLR}.

Initially, 
%
we revisit Lemma \ref{lemma4.1} and observe that if $0<\alpha_F\leq 1$ represents the optimal exponent from the $C^{1+ \overline{\alpha}, \frac{1+ \overline{\alpha}}{2}}$ regularity theory for solutions to homogeneous $(\lambda, \Lambda, \upsilon)$-parabolic operators with constant coefficients, then given
\begin{equation}\label{Choieq}
  \alpha \in (0, {\alpha}_F) \cap \left(0, 1-\left(\frac{n}{p} + \frac{2}{q}\right)\right],
\end{equation}
since $\displaystyle \left(\intav{Q_1} \Theta^{P}_F(X, t)\right)^{\frac{1}{P}}$  and $\|f\|_{L^{p,q}}$ are under universal smallest regime assumption, we are able to choose
\begin{equation}\label{Radius}
   \rho \defeq \left(\frac{1}{2C}\right)^{\frac{1}{{\alpha}_F - \alpha}}
\end{equation}
such that
\begin{equation}\label{eq5.2}
   \displaystyle \sup_{Q_{\rho}} |(u-L)(X, t)|\leq \rho^{1+ \alpha},
\end{equation}
where $L$ is given by \eqref{eq4.5}. This is the first step in our induction process. Now, verified the $k$th step in the induction process
\begin{equation}\label{eq5.2 - ind process}
   \displaystyle \sup_{Q_{\rho^k}} |(u-L_k)(X, t)|\leq \rho^{k(1+ \alpha)}
\end{equation}
with the following order of approximation for the coefficients
\begin{equation}\label{decaycoeff}
|A_{k+1}-A_k|\leq C\rho^{k(1+\alpha)} \quad \mbox{and} \quad |B_{k+1}-B_k|\leq C\rho^{k\alpha}.
\end{equation}
We define the re-scaled function
$$
    v_k(X, t) \defeq \frac{(u-L_k)(Y+\rho^k X, \tau + \rho^{2k} t)}{\rho^{k(1+\alpha)}},
$$
which verifies $|v_k|\leq 1$ in $Q_1$, and satisfies in the $L^P$-viscosity sense
\begin{equation}\label{eq5.4}
  \partial_t v_k- G_k(D^2v_k, Dv_k, X, t) = f_k(X, t) + g_k(X, t) = H_k(X, t)
\end{equation}
where

$$
    \displaystyle G_k(M, \overrightarrow{p}, X, t) = {\rho}^{k(1-\alpha)}F \left( \frac{1}{{\rho}^{k(1-\alpha)}}M, \rho^{k\alpha}\overrightarrow{p}, {\rho^k} X, {\rho}^{2k} t\right)
$$
is a $(\lambda, \Lambda, \upsilon)$-parabolic operator and

$$
f_k(X, t)\defeq \rho^{k(1- \alpha)}f( Y+\rho^k X, \tau + \rho^{2k} t)
$$

$$
g_k(X, t) \defeq \rho^{k(1-\alpha)}[G_k(D^2v_k, D v_k + B_k, X, t)-G_k(D^2v_k, D v_k, X, t)]
$$

Now,
$$
      \|f_k\|_{L^{p, q}(Q_1)} = \omega(\rho^{k})\|f\|_{L^{p, q}(Q_{\rho^k}(Y, \tau))}< \frac{\eta}{2},
$$
where $\omega(\rho^{k}) = \rho^{k\left[1-\alpha-\left(\frac{n}{p}+\frac{2}{q}\right)\right]}$ is computed by change of variables. By the integrability relation and the value of $\alpha$, we conclude $\omega(\rho^k) \leq 1$  for all integer $k\geq 1$.  Also
$$
\begin{array}{ccc}
   |g_k(X, t)| & \leq & \tau(\rho^k)\upsilon\\
\end{array}
$$
where as before $\tau$ is easily computed explicitly using \eqref{decaycoeff}. Again we verify $\tau(\rho^k) < 1$ and in fact  $\displaystyle \lim_{k \to \infty} \tau(\rho^k) = 0$. Thus,
$$
\|g_k\|_{L^{p, q}(Q_1)} \leq \upsilon \mathcal{L}^n(B_1(0))^{1/p}<\frac{\eta}{2}.
$$
Finally,
$$
    \displaystyle \left(\intav{Q_1} \Theta^{P}_G(X, t)\right)^{\frac{1}{P}}\leq \displaystyle \left(\intav{Q_1} \Theta^{P}_F(X, t)\right)^{\frac{1}{P}} \quad \mbox{and} \quad \|H_k\|_{L^{p,q}(Q_1)} \leq  \|f_k\|_{L^{p,q}(Q_1)} + \|g_k\|_{L^{p,q}(Q_1)} < \eta;
$$
therefore, we can apply the first induction step, which gives the existence of an affine function $\overline{L}_k(X) \defeq \overline{A}_k + \langle \overline{B}_k, X  \rangle$ with $|\overline{A}_k|, |\overline{B}_k| \leq C(n, \lambda, \Lambda)$ such that
$$
\displaystyle \sup_{Q_{\rho}}|v_k-\overline{L}_k| \leq \rho^{1+\alpha}.
$$
Rewriting the previous estimate in the unit picture gives
$$
	\displaystyle \sup_{Q_{\rho^{k+1}}}|u-\overline{L}_{k+1}| \leq \rho^{(k+1)(1+\alpha)},
$$
for $L_{k+1}(x) \defeq L_{k}(x)+ \rho^{k(1+\alpha)}\overline{L}_{k}(\rho^{-k}X)$.
The coefficients fulfils
\begin{equation}\label{eq5.5}
     |A_{k+1}-A_k| + {\rho}^k|B_{k+1} - B_{k}| \leq C_0(n, \lambda, \Lambda)
      {\rho}^{(1+\alpha)k},
\end{equation}
hence, from \eqref{eq5.5}, we conclude that  $(A_k)_{k \geq 1} \subset \mathbb{R}$ and
$(B_k)_{k \geq 1} \subset \mathbb{R}^n$ converge  to $u(Y, \tau)$ and to $D u(Y,\tau)$ respectively. Moreover we have the following control
\begin{equation}\label{eq5.7}
   |u(Y, \tau)-A_k| \leq C_0 \frac{{\rho}^{k(1+\alpha)}}{1-\rho} \quad \mbox{and} \quad
   |D u(Y, \tau)-B_k| \leq C_0 \frac{{\rho}^{k\alpha}}{1-\rho}
\end{equation}
Finally, given any $0 < r < \rho$,  let $k$ be an integer such that $ (X,t) \in Q_{\rho^k}(Y, \tau)\setminus Q_{\rho^{k+1}}(Y, \tau)$. Therefore, we estimate from   \eqref{eq5.7} that
$$
     \displaystyle \sup_{Q_r(Y, \tau)} |u(X,t)-[u(Y, \tau)+\langle D u(Y, \tau),
      X-Y\rangle]| \leq r^{1+\alpha}
$$
and the sketch is finished.

\begin{remark} We highlight that the previous result must be interpreted in following way
$$
 \left\{
\begin{array}{cll}
 \hbox{If} \quad \sigma  = 1-\left(\frac{n}{p} + \frac{2}{q}\right)< \alpha_F & \hbox{then} & u \in C_{loc}^{1+\sigma, \frac{1+\sigma}{2}}(Q_1)\\
  \hbox{If} \quad  1-\left(\frac{n}{p} + \frac{2}{q}\right) \geq \alpha_F & \hbox{then} & u \in C_{loc}^{1 + \gamma, \frac{1 + \gamma}{2}}(Q_1), \hbox{ for any } \gamma< \alpha_F.
\end{array}\right.
$$
\end{remark}

\begin{remark} The optimality of previous result can be verified by an example due to Krylov in \cite[Page 209]{KRYL4}.
\end{remark}


\section{Parabolic $C^{1, \mathrm{Log-Lip}}$ type estimates}\label{DLLR}

\hspace{0.6cm}In this last section we  address the issue of finding the optimal regularity estimate  for the limiting upper borderline case $f \in $ BMO, which encompasses the case $f\in L^{\infty, \infty}\simeq L^{\infty}$.

In view of the almost optimal estimates given in the previous section, establishing a quantitative regularity result for solutions to \eqref{Meq} with bounded forcing term, requires that $F$-harmonic functions are $C^{2+ \sigma, \frac{2+ \sigma}{2}}$ smooth; otherwise no further information could be reveled from better hypotheses on the source function $f$. Evans-Krylov's regularity theory \cite{Evans}, \cite{KRYL1} and \cite{KRYL2} assures that convex/concave equations do satisfy the $C^{2+ \sigma, \frac{2+ \sigma}{2}}$ smoothness assumption.

We now state and prove our sharp $C^{1, \text{Log-Lip}}$ interior regularity theorem. For simplicity we will work on equations with constant coefficients and with no gradient dependence. Similar result can be easily obtained under continuity condition on the coefficients and Lipschitz control on the gradient dependence.

\begin{theorem}\label{thm6.1} Let $u \in C^{0}(Q_1)$ be a $C^0$-viscosity solution to $u_t - F(D^2u) = f(X,t) \quad in \quad Q_1$. If any solution of $v_t - F(D^2v +C) = D$, where $C \in Sym(n)$ and $D \in \mathbb{R}$ are on the surface $F(C) = D$, has interior $C^{2+ \sigma, \frac{2+ \sigma}{2}}$ \textit{ a priori} estimates, i.e.,

\begin{equation}\label{eq6.1}
    \|v\|_{C^{2+ \sigma, \frac{2+ \sigma}{2}}(Q_r)} \leq \frac{\overline{\Phi}}{r^{2+\sigma}}\|v\|_{L^{\infty}(Q_1)}
\end{equation}
for some $\overline{\Phi}(\Lambda, \lambda,n ) > 0$. Then, for a constant $C(\overline{\Phi}, \Lambda, \lambda,n) > 0$, there holds
\begin{equation}\label{eq6.2}
    |u(X,t) - [u(0,0) + \langle D u(0,0),  X\rangle]| \leq C\left\{\|u\|_{L^{\infty}(Q_1)}+ \|f\|_{BMO(Q_1)}\right\}\omega(d_{\text{par}}((X,t),(0,0)))
\end{equation}
where $\omega(r) = r^2 \log \frac{1}{r}$ is the $C^1$-Log-Lipschitz  modulus of continuity.
\end{theorem}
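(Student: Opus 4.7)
The plan is to iterate a quadratic polynomial approximation scheme modeled on Sections \ref{HR}--\ref{DR} but upgraded to second order, taking advantage of the assumed $C^{2+\sigma,\frac{2+\sigma}{2}}$ \emph{a priori} estimate \eqref{eq6.1}. The key feature distinguishing this case from the previous ones is that the Hessian and time-derivative coefficients of the approximating polynomials will not contract from scale to scale; rather they will drift by universally bounded amounts, producing the logarithmic factor that yields the $r^2\log(1/r)$ modulus.

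The first step is a second-order approximation lemma: after the standard normalization $|u|\le 1$ and $\|f\|_{\mathrm{BMO}(Q_1)}\le\eta$ for a small universal $\eta$, there exist a universal radius $\rho\in(0,\tfrac12)$ and a quadratic-in-space, linear-in-time polynomial $P(X,t)=a+\langle b,X\rangle+\tfrac12\langle CX,X\rangle + D\,t$, with universally bounded coefficients and with $C,D$ chosen so that the associated limiting equation lies on the surface $F(C)=D$, such that $\sup_{Q_\rho}|u-P|\le \rho^{2}$. The proof runs by compactness/contradiction parallel to Lemma \ref{Lemma1}: since $\|f\|_{\mathrm{BMO}}\le\eta$, John--Nirenberg gives $\|f-(f)_{Q_1}\|_{L^{P_0}(Q_1)}\lesssim\eta$ for every finite $P_0$, so $L^P$-viscosity stability allows one to pass to a limit function $\tilde h$ satisfying a constant-source, constant-coefficient parabolic equation of the type appearing in the hypothesis of the theorem. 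The \emph{a priori} estimate \eqref{eq6.1} then produces a second-order Taylor polynomial of $\tilde h$ at the origin whose un-scaled version is the desired $P$, once $\rho$ is chosen small enough that $C\rho^{2+\sigma}\le\tfrac12\rho^{2}$.

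The main iteration proceeds by induction. Assume we have polynomials $P_k(X,t)=A_k+\langle B_k,X\rangle+\tfrac12\langle C_k X,X\rangle+D_k t$ with $C_k,D_k$ adjusted relative to $(f)_{Q_{\rho^k}}$ and $\sup_{Q_{\rho^k}}|u-P_k|\le \rho^{2k}$. The rescaled function $v_k(X,t):=\rho^{-2k}(u-P_k)(\rho^k X,\rho^{2k}t)$ has $|v_k|\le 1$ and solves $(v_k)_t-G_k(D^2 v_k)=\tilde f_k(X,t)$, where
\[
G_k(M):=F(M+C_k)-F(C_k),\qquad \tilde f_k(X,t):=f(\rho^k X,\rho^{2k}t)-(f)_{Q_{\rho^k}}.
\]
The operator $G_k$ is $(\lambda,\Lambda)$-elliptic with $G_k(0)=0$ and inherits the same $C^{2+\sigma,\frac{2+\sigma}{2}}$ \emph{a priori} estimate, since the hypothesis \eqref{eq6.1} is stipulated uniformly over the surface $F(C)=D$. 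Crucially, $\|\tilde f_k\|_{\mathrm{BMO}(Q_1)}\le\|f\|_{\mathrm{BMO}(Q_1)}\le\eta$ and $(\tilde f_k)_{Q_1}=0$, so the hypotheses of the approximation lemma are verified at every scale. Applying it to $v_k$ and unscaling produces $P_{k+1}$ with
\[
|A_{k+1}-A_k|\lesssim \rho^{2k},\qquad |B_{k+1}-B_k|\lesssim \rho^{k},\qquad |C_{k+1}-C_k|+|D_{k+1}-D_k|\lesssim 1.
\]

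The geometric series on $A_k$ and $B_k$ converge and identify their limits with $u(0,0)$ and $Du(0,0)$ respectively; by contrast, the non-decaying increments in the last display allow $|C_k|,|D_k|\lesssim k$. For $(X,t)\ne(0,0)$ with $r:=d_{\mathrm{par}}((X,t),(0,0))<\rho$, selecting $k$ such that $r\in(\rho^{k+1},\rho^k]$ yields
\[
\bigl|u(X,t)-[u(0,0)+\langle Du(0,0),X\rangle]\bigr|\lesssim \rho^{2k}+\rho^k|X|+k(|X|^2+|t|)\lesssim k\,r^2,
\]
and $k\sim\log_{1/\rho}(1/r)$ delivers the claimed $r^2\log(1/r)$ modulus. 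The main technical point is the self-consistency of the iteration in BMO: the source $\tilde f_k$ remains BMO-bounded by $\|f\|_{\mathrm{BMO}}$ at every scale---parabolic scaling preserves the BMO seminorm and recentering kills the average---so the smallness threshold $\eta$ of the approximation lemma propagates without logarithmic loss \emph{at the level of the equation}. The logarithmic growth appears only in the Hessian and time-derivative coefficients $C_k,D_k$, where it is harmless and in fact is precisely what produces the sharp $r^2\log(1/r)$ modulus.
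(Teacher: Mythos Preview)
Your proposal is correct and follows essentially the same approach as the paper: normalize, iterate a quadratic-polynomial approximation via a compactness step that leverages the assumed $C^{2+\sigma,\frac{2+\sigma}{2}}$ estimates, observe that the Hessian and time-derivative coefficients drift by $O(1)$ per step (giving $|C_k|,|D_k|\lesssim k$), and read off the $r^2\log(1/r)$ modulus. The only cosmetic differences are that you recenter the source by the local average $(f)_{Q_{\rho^k}}$ at each scale and invoke John--Nirenberg explicitly, whereas the paper works throughout with the fixed global average $[f]=\intav{Q_1}f$ and a direct variant of Lemma~\ref{Lemma1}.
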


\begin{proof} By standard reduction arguments, we can assume that $\|u\|_{L^{\infty}(Q_1)} \leq \frac{1}{2}$ and $\|f\|_{\textrm{BMO}(Q_1)} \leq {\vartheta}_0$ for some ${\vartheta}_0 > 0$ which will be chosen \textit{a posteriori}. Throughout the proof we use the notation
$$
	\displaystyle [f] \defeq \intav{Q_1} f(Z, \varsigma)dZd\varsigma.
$$
The strategy is to find parabolic quadratic polynomials
$$
    \mathcal{P}_k(X,t) \defeq \frac{1}{2}\langle A_kX,X \rangle + B_k t  + \langle C_k, X \rangle + D_k
$$
such that
$ \mathcal{P}_0 = \mathcal{P}_{-1} =  \frac{1}{2}\langle NX,X \rangle$, where $F(N) = [f]$ and for all $k \geq 0$,
\begin{equation}\label{eq6.3}
   B_k - F(A_k)=[f] \quad and \quad \sup_{Q_{\rho^k}} |u- \mathcal{P}_k| \leq \rho^{2k},
\end{equation}
with
\begin{equation}\label{eq6.4}
    \rho^{2(k-1)}(|A_k-A_{k-1}| + |B_k-B_{k-1}|)+ \rho^{k-1}|C_k-C_{k-1}| + |D_k-D_{k-1}| \leq C\rho^{2(k-1)}
\end{equation}
where the radius $0<\rho \ll \frac{1}{2}$ in\eqref{eq6.3} and \eqref{eq6.4} will also be determined \textit{a posteriori}. We prove the existence of such a polynomials by induction process in $k$. The first step of induction, $k = 0$, it is obviously satisfied. Suppose now that we have verified the thesis of induction for $k = 0,1,\ldots, i$. Then, defining the re-scaled function $v\defeq Q_1 \to \R$ given by
$$
    v_i(X,t) = \frac{(u-\mathcal{P}_i)(\rho^i X, \rho^{2i}t )}{\rho^{2i}},
$$
we have, by induction hypothesis, that $|v_i| \leq 1$ and it solves
$$
    \partial_tv_i - F_i(D^2v_i) = f(\rho^i X, \rho^{2i}t)-B_i \defeq f_i(X, t)
$$
in the $C^0$-viscosity sense, where $F_i(M) \defeq F(M+A_i) - B_i$ which is a $(\lambda, \Lambda, 0)$-parabolic operator with
$$
   \begin{array}{lll}
    \|f_i\|_{\textrm{BMO}(Q_1)}  & \defeq&  \displaystyle \sup_{0<r\leq 1} \intav{Q_r} \left|f_i(X,t) - \intav{Q_r} f_i(Y, \tau)dYd \tau\right|dXdt  \\
       &  = &  \displaystyle \sup_{0<r\leq 1} \intav{Q_{r\rho}} \left|f(Z,\varsigma) - \intav{Q_{r\rho}} f(W, \omega)dWd\omega\right|dZd\varsigma\\
   & \leq&  \|f\|_{\textrm{BMO}(Q_1)} \\
&\leq&  \vartheta_0.
   \end{array}
$$
As in Lemma \eqref{Lemma1}, with some slight changes, and, under smallness assumption on $ \|f\|_{BMO(Q_1)}$
to be set soon, we can find a $C^0$-viscosity solution $h$ to
$$
   h_t - F(D^2h + M_i)= [f] \quad in \quad Q_1,
$$
such that
$$
   \sup_{Q_{1/2}} |(v-h)(X,t)|\leq \delta,
$$
for some $\delta>0$ which we will choose below. From hypothesis \eqref{eq6.1}, $h$ is $C^{2+\sigma, \frac{2+\sigma}{2}}$ at the origin with universal bounds. Thus, if we define
$$
    \mathcal{P}(X,t) \defeq \frac{1}{2}\langle D^2h(0,0)X,X \rangle + h_t(0,0)t + \langle D h(0,0),X\rangle + h(0,0),
$$
by the $C^{2+\sigma, \frac{2+\sigma}{2}}$ regularity assumption \eqref{eq6.1}, we can estimate

$$
    |D^2h(0,0)|+ |h_t(0,0)|+ |D h(0,0)|+ |h(0,0)|\leq C\overline{\Phi}
$$
where
$$
    |(h-\mathcal{P})(X,t)| \leq C(n)\overline{\Phi} d_{\text{par}}((X,t),(0,0))^{2+\sigma}.
$$
Now, we are able to select
$$
    \rho \defeq \left(\frac{1}{2C\overline{\Phi}}\right)^{\frac{1}{\sigma}} \quad \text{ and } \quad \delta \defeq \frac{1}{2}\rho^2.
$$
The choice above for $\rho(\overline{\Phi}, \sigma , \Lambda, \lambda, n) \ll \frac{1}{2}$  decides the value for $\delta(\overline{\Phi}, \sigma , \Lambda, \lambda, n)>0$ which determines, by Lemma \eqref{Lemma1}, the universal smallness regime given by the constant $\vartheta_0 > 0$. From the previous choices, we readily obtain
\begin{equation}\label{eq6.5}
    \sup_{Q_{\rho}} |v - \mathcal{P}|\leq \rho^2.
\end{equation}
Rewriting \eqref{eq6.5} back to the unit picture yields
\begin{equation}\label{eq6.6}
    \displaystyle \sup_{Q_{\rho^{i+1}}} \left|u(X,t) - \left[\mathcal{P}_i(X,t)+ \rho^{2i}\mathcal{P}\left(\frac{X}{\rho^i}, \frac{t}{\rho^{2i}}\right)\right]\right|\leq \rho^{2(i+1)}.
\end{equation}
Therefore, defining
$$
    \mathcal{P}_{i+1}(X,t) \defeq \mathcal{P}_i(X,t) + \rho^{2i} \mathcal{P}\left(\frac{X}{\rho^i}, \frac{t}{\rho^{2i}}\right),
$$
we verify the $(i+1)^{\underline{th}}$ step of induction and, clearly, the required conditions \eqref{eq6.3} and \eqref{eq6.4} are satisfied.  From  \eqref{eq6.4} we conclude that $D_k \rightarrow u(0,0)$ and $C_k \rightarrow D u(0.0)$, with the following estimates
\begin{equation}\label{eq6.7}
    |u(0,0)-D_k|\leq \frac{C\rho^{2k}}{1-\rho} \quad \text{and} \quad
    |D u(0,0)-C_k| \leq \frac{C\rho^k}{1- \rho}.
\end{equation}
Furthermore, equation \eqref{eq6.4} yields the grow controls:
\begin{equation}\label{eq6.8}
    \displaystyle |A_i| \leq \sum_{j=1}^{k}|A_j - A_{j-1}|\leq Ck \quad \text{ and } \quad |B_k| \leq \sum_{j=1}^{k} |B_j-B_{j-1}|\leq Ck.
\end{equation}
Finally, given any $0 < r < \rho$,  let $k$ be an integer such that
$$
    (X,t) \in Q_{\rho^k}(Y, \tau)\setminus Q_{\rho^{k+1}}(Y, \tau)
$$
We estimate from equations  \eqref{eq6.3}, \eqref{eq6.7} and \eqref{eq6.8},
$$
\begin{array}{rl}
     \displaystyle \sup_{Q_r(0)} |u(X,t)-[u(0,0)+\langle D u(0,0), X\rangle]| \leq & \displaystyle   {\rho}^{2k} + |u(0,0)-D_k|+\rho^k|D u(0,0)-C_k| \\
	+&  \rho^{2k}(|B_k|+|A_k|)\\
    \leq &  C r^2 \log r^{-1},
\end{array}
$$
and the proof of Theorem is finished.
\end{proof}

\begin{remark}The final estimate says that solutions to \eqref{Meq} are asymptotically $C^{2, 1}$ in the parabolic sense. Furthermore, adjustments on the previous explanation yield $u_t, D^2u \in BMO(Q_{1/2})$, with appropriate \textit{a priori} estimate in terms of the $BMO$-norm of $f$ in $Q_1$. Indeed,  under appropriate smallness regime on $f \in BMO(Q_1)$ we can approximate $u$ by a solution $h$ to
$$
    h_t-\mathfrak{F}(D^2h, X, t) = [f] \quad \mbox{in} \quad Q_{1/2}
$$
in the $W^{2, 1, s}(Q_{1/2})$ topology. Thus, by an iterative process similar to the one used here one finds parabolic quadratic polynomials $\mathcal{P}_k$ such that

$$
    \displaystyle \intav{Q_{\rho^k}}(|\partial_t(u-\mathcal{P}_k)|^s + |D^2(u-\mathcal{P}_k)|^s) \leq 1
$$

Therefore, the previous sentence provides the aimed BMO  estimate. In other words,
$$
   \|u_t\|_{BMO(Q_r)}+ \|D^2 u\|_{BMO(Q_r)} \leq C\{\|u\|_{L^{\infty}(Q_1)} + \|f\|_{BMO(Q_1)}\}, \quad \mbox{for} \quad 0<r\ll 1
$$
\end{remark}


\begin{remark} The result proven  in this Section can be further applied to equations of the form $u_t-F(D^2u, X, t) = f(u, X, t)$, where $f$ is continuous. It is particularly meaningful to geometric flow problems:
$$
  H_t-\Delta H- H|A|^2 = 0,
$$
where $H$ is the inwards mean curvature vector of the surface at position $X$ and time $t$ and $|A|$ represents the norm of the second fundamental form. This equation describes the mean curvature hypersurface in the Euclidean space $\R^{n+1}$, see for example \cite{SW}.
\end{remark}


\begin{remark} As a final remark, we note that the results proven in this article can be generalized for a more general class of anisotropic Lebesgue spaces with mixed norms. Namely, consider $\overrightarrow{p} = (p_1, \ldots, p_n)$. Let $f\in L^{p_1, \ldots , p_n, q}(Q_1)$, i.e., $f \in L_{X_1}^{p_1} \ldots L_{X_n}^{p_n}L_{t}^{q}$.
The quantity 
$$
	\displaystyle \kappa(n, p_1, \ldots, p_n, q) = \left( \sum_{i=1}^{n} \frac{1}{p_i}\right)+ \frac{2}{q}
$$ 
sets up the following regularity regimes, with universal {\it a priori} estimates:
\begin{itemize}
\item  $1<\kappa(n, p_1, \ldots, p_n, q)< \frac{n+2}{n_P}<2$ for the $C^{\alpha, \frac{\alpha}{2}}$ regularity regime;
\item $\kappa(n, p_1, \ldots, p_n, q) =1$ for the Lipschitz logarithmical type estimates;
\item  $0<\kappa(n, p_1, \ldots, p_n, q) <1$ for the $C^{1+\alpha, \frac{1+\alpha}{2}}$ regularity regime.
\end{itemize}
\end{remark}

\bigskip


\end{document}